\documentclass{article}
\usepackage[letterpaper, textwidth=5.0in]{geometry}

\usepackage{amssymb,amsbsy,amsmath,amsfonts,amssymb,amscd,amsthm}
\usepackage{graphicx}
\graphicspath{
  {images/}{../images/}
}
\usepackage{xcolor}
\usepackage{cancel}
\usepackage{hyperref}
\usepackage{ulem}
\usepackage{subcaption}
\usepackage{float}
\usepackage{comment}

\newtheorem{dummy}{dummy}[section]
\newtheorem{Theorem}[dummy]{Theorem}
\newtheorem{proposition}[dummy]{Proposition}
\newtheorem*{Proposition*}{Proposition}
\newtheorem{lemma}[dummy]{Lemma}

\newtheorem{defi}[dummy]{Definition}
\newtheorem{Remark}[dummy]{Remark}

\title{
Galerkin method for asymmetrically-weighted Hermite approximations applied to the Vlasov-Poisson system
}
\author{
Ruiyang DAI%
\thanks{\url{ruiyang.dai@inria.fr}}
}
\date{}

\begin{document}
\maketitle
\begin{abstract}
We investigate a numerical method for the Vlasov-Poisson (VP) system utilizing asymmetrically-weighted (AW)
Hermite bases in velocity space, which is an hyperbolic system. 
In particular, we concentrate on spectral methods in velocity.
For the Hermite spectral form of the VP system,
we analyze the resaon that the form with AW Hermite bases can be instable.
To obtain $L^2$ stability properties, 
we consider a Galerkin method intead of the classical Petrov-Galerkin method,
which naturally ensures stability with respect to the $L^2$ norm.
We also present an equivalent form of the method that  
maintains a computational cost modest compared to that of the Petrov-Galerkin method.
Finally, we present numerical simulations based on the proposed Hermite spectral method, 
showcasing its stability.
\end{abstract}

\tableofcontents
\section{Introduction}
The use of plasmas in everyday life has become increasingly common. 
Examples include neon tubes and plasma screens, and plasmas are also widely used in various industrial applications, 
such as the fusion plasma \cite{miyamoto2005plasma}.
The one of the most significant programs is the ITER program 
(originally the International Thermonuclear Experimental Reactor), 
an international collaboration designed to demonstrate the feasibility of 
generating electricity through controlled nuclear fusion. 
A tokamak is a large toroidal chamber in which nuclear fusion reactions occur 
within a plasma confined by powerful magnetic fields. 
ITER will use a deuterium-tritium fuel mixture, 
selected because the most accessible fusion reaction involves 
the fusion of deuterium and tritium nuclei-two isotopes of hydrogen-producing 
a helium nucleus and a highly energetic neutron. 
The energy carried by this neutron can then be converted into heat and ultimately used to generate electricity.
Therefore, it is crucial to employ models that account for 
both the intrinsic collective dynamics of the plasma 
and the external forces that strongly shape the system's evolution.
Kinetic models often are used to describe the evolution of a plasma.
One of the models is currently applied in plasma physics simulations is the Vlasov-Maxwell system,
which describe microscopic plasma dynamics through the phase-space distribution function 
defined in seven dimensions: three spatial coordinates, three velocity coordinates, and time.

The VP system, which is a simplification of the Vlasov–Maxwell system, 
is one of the simplest model in plasma physics simulations.
Due to their high dimensionality, and plus nonlinearities, 
the VP system is challenging to solve numerically. 
Consequently, the development of accurate numerical methods for 
solving this system has been an active area of research since the 1960s.
Particle-in-cell (PIC) methods have been a popular and effective method for plasma physics \cite{birdsall2018plasma}.
PIC methods, approximating the kinetic simulations by a finite number of macro particles, 
effectively reduce the dimension from six to three. 
However, the main drawback of PIC methods is their inherent numerical error
associated with particle noise \cite{filbet2003numerical}, 
which decreases slowly when the number of particles increases. 
More specifically, the noise in PIC methods decreases in proportion to 
the inverse square root of the number of particles per cell.

To overcome this limitation, Eulerian solvers---that is, methods that discretize the Vlasov equation on a six-dimensional phase-space grid (three spatial and three velocity dimensions)---can be employed.
Their development has been extensively investigated in the literature, 
and comprehensive reviews of the various approaches, 
together with their respective advantages and limitations, 
can be found in \cite{filbet2003comparison,duclous2009high,sonnendrucker2004vlasov}.
These approaches include, among others, 
finite-volume methods \cite{filbet2001convergence}, 
Fourier–Fourier transform schemes \cite{klimas1994splitting}, 
and semi-Lagrangian schemes \cite{sonnendrucker1999semi}.

Using orthogonal polynomials in the velocity variable of Eulerian solvers results in spectral methods. 
The idea of representing the distribution function with a finite set of orthogonal polynomials 
using Galerkin or Petrov-Galerkin methods, 
rather than discretizing it directly in velocity space, dates back to the 1960s \cite{armstrong1966numerical,joyce1971numerical}.
Many Galerkin or Petrov-Galerkin methods for the VP system have focused on using Hermite polynomials in 
velocity space.
Moreover, in many plasma physics problems, 
the solutions exhibit exponential decay as $|v|\to\infty$.
In such cases, it is reasonable to employ Hermite functions, 
obtained by combining Hermite polynomials with a Gaussian function, 
as basis functions.
Motivated by this choice of basis, 
it is therefore natural to consider weight functions \(\omega(v)\) 
that yield orthogonal systems.
Based on the choice of the Gaussian function,
there are two main approaches of Hermite discretization in the velocity variable, 
both of which lead the VP system to an hyperbolic system: 
the  symmetrically-weighted (SW) 
and asymmetrically-weighted (AW) Hermite functions.
In \cite{holloway1996spectral}, Holloway formalized these two approaches. 
The first one is based on the SW Hermite functions as the basis in velocity 
and as test functions in the Galerkin method, 
corresponding to the choice of weight function $\omega(v)=1$.
It shows numerically that this SW method cannot simultaneously conserve mass, momentum and total energy. 
However, it conserves the $L^2$ norm of the distribution function, which ensures the stability of the method. 
The second approach utilizes AW Hermite functions as trial functions 
and a distinct set of test functions orthogonal to the AW Hermite basis 
in the Petrov–Galerkin method.
In this case, the test functions are constructed 
from AW Hermite functions together 
with a weight function depending on $v$. 
This approach yields the simultaneous conservation of 
mass, momentum, and total energy.
However, it also shows numerically that the method based on AW Hermite functions 
does not conserve the $L^2$ norm of the distribution function 
and is then not numerically stable. 
In \cite{kormann2021generalized}, K. Kormann and A. Yurova, 
using the idea of telescoping sums to show conservation 
for SW and AW Hermite functions, 
reaches conclusions consistent with those in \cite{holloway1996spectral}. 

The aim of this work is 
to explain why the numerical scheme based on the AW Hermite functions in the Petrov-Galerkin method is unstable and
to present a stable numerical scheme based on the AW Hermite functions in the Galerkin method,
where the AW Hermite functions are used as both trial and test functions.
Among recent works which explicitly mention the instability of 
the numerical scheme based on the AW Hermite functions 
in the Petrov-Galerkin method, 
we quote \cite{manzini2017convergence,funaro2021stability} where 
the first relies on adding a Fokker-Planck perturbation to enforce stability
while the second provides a mathematical investigation of the stability of the Hermite-Fourier spectral approximation of 
the VP model for a collisionless plasma in the electrostatic limit. 
The analysis includes high-order artificial collision operators of Lenard-Bernstein type. 
The next contribution, 
in \cite{bessemoulin2022stability,bessemoulin2023convergence}, 
evolves the AW Hermite functions with a time-dependent scaling 
following an idea originally introduced in \cite{ma2005hermite,ma2007stabilized} 
for second-order differential equations.
The authors introduce a time-dependent weighted norm 
whose evolution can be interpreted as an effective increase 
in the reference temperature over time.
A related approach is presented in \cite{pagliantini2023physics}, 
where the authors propose a spectral method for the 1D-1V VP system. 
The discretization in velocity space is based on AW Hermite functions, 
which are dynamically adapted via a scaling and shifting of the velocity variable, 
to maintain the stability of the numerical scheme.
In particular, at each time instant, an adaptivity criterion is used to select updated values of the scaling and shifting 
based on the numerical solution of the discrete VP system obtained at that time step.
Finally, \cite{dai2024quadratic} provides an analysis of 
the instability mechanisms associated with the use of 
AW Hermite functions for the discretization of the linear transport equation.

%
%

We next introduce the VP system and the numerical method, 
which is the model that we study in the present article.
This system describes the temporal evolution of the plasma particle distribution function
under the influence of a self-consistent electrostatic field,
generated by the charge and current densities of the particles themselves.
In this study, we concentrate on a dimensionless,
one-dimensional VP system for a single particle species
\begin{equation}\label{eq:VPSystem}
\left\{
\begin{aligned}
&   \frac{\partial f}{\partial t}
+ v \frac{\partial f}{\partial x}
+ E \frac{\partial f}{\partial v} = 0, \\
&   \frac{\partial E}{\partial x} = \rho - \rho_0, \\
\end{aligned}
\right.
\end{equation}
with \( t \geq 0 \), position \( x \in (0, L) \) and velocity \( v \in \mathbb{R} \).
The self-consistent electric field $E(t,x)$ is determined by the Poisson equation.
Periodic boundary conditions are prescribed in space.
The density \( \rho \) is given by
\begin{equation}
    \rho(t, x) = \int_{\mathbb{R}} f(t, x, v) \, dv.
\end{equation}
The constant \( \rho_0 \) ensures the quasi-neutrality condition of the plasma
\begin{equation}
    \int_0^L (\rho - \rho_0) \, dx = 0.
\end{equation}
In this work, the natural functional setting is
\[
   L_{\omega}^2
:= \left\{
   g : \mathbb{R} \times \mathbb{R} \to \mathbb{R}
     : \int_{\mathbb{R}^2} |g(x,v)|^2 \exp(\frac{v^2}{T}) 
     dxdv < +\infty
   \right\}.
\]
where $T>0$ denotes the reference temperature.
The corresponding norm is
\[
  \|g\|_{\omega}^2 
= \int_{\mathbb{R}^2} |g(x,v)|^2 \exp(\frac{v^2}{T}) dxdv. 
\]
We also consider the standard $L^2$ norm,
\[
  \|g\|^2 
= \int_{\mathbb{R}^2} |g(x,v)|^2 dxdv. 
\]
Since $\exp(\frac{v^2}{T})\geq 1$ for all $v\in\mathbb{R}$, 
it follows immediately that
\[
  \|f\|^2 \le \|f\|_{\omega}^2.
\]
A popular choice to approximate the distribution function $f(t,x,v)$ is to use a finite sum 
which corresponds to a truncation of a series
\begin{equation}\label{eq:truncation_series}
f_N(t,x,v) = \sum_{n=0}^N u_n(t,x) \psi_n(v),
\end{equation}
where $N+1$ is the number of modes, and $(\psi_n(v))_{n=0,\cdots,N}$ are AW Hermite functions
which are introduced in the next section.
We employ~\eqref{eq:truncation_series} in the velocity variable as both trial and test functions, 
thereby obtaning a Galerkin formulation.
In contrast to the Petrov-Galerkin method, the trial and test functions are identical and no longer orthogonal, 
which give rise to a Gram matrix.
Furthermore, we consider a spatial discretization for the semi-discreted VP system, 
written as an hyperbolic system: Fourier spectral method.
In \cite{manzini2017convergence}, a rigorous convergence theory for the SW Fourier–Hermite method is established.
However, there is no convergence theory for the AW Fourier-Hermite method because 
the method is not stable under the Petrov-Galerkin framework.
We propose the numerical method based on the AW Hermite functions in the Galerkin method, 
which is to be the stone to investigate the convergence analysis of the proposed method.

In Section~\ref{sec:Hermite spectral expansion of the Vlasov equation}, 
we introduce the formulation of the Vlasov equation
using AW Hermite functions in velocity within both the Petrov-Galerkin and Galerkin methods. 
We first examine the Petrov-Galerkin method and explain the source of its numerical instability.
We then introduce the Gram matrix arising from the Galerkin method.
Since the Gram matrix involved in the Galerkin method is dense,
its direct use leads to significant computational costs.
We then present an equivalent form to the Galerkin method, 
which maintains a modest computational.
Then in Section~\ref{sec:Fourier spectral method for the space discretization}, 
we introduce the Fourier spectral method for the spatial discretizations.
Finally in Section~\ref{sec:Numerical results}, 
we present numerical results for 
advection in velocity, two stream instability and bump-on-tail problem, 
highlighting the conservation properties and stability of the proposed discretization.

\section{Hermite spectral expansion of the Vlasov equation}
\label{sec:Hermite spectral expansion of the Vlasov equation}

For the solution of the VP system~\eqref{eq:VPSystem}, a corresponding variational problem can be derived, 
and the solution of the variational problem 
can be approximated by Galerkin methods, 
using AW Hermite functions as trial functions and test functions.
Petrov-Galerkin methods extend the idea of Galerkin methods 
using different functions for the approximate solution and the test functions.
Firstly, we introduce the AW Hermite functions. 

\subsection{Hermite functions}
We seek the approximation $f_N$ of the solution $f$ to the VP system \eqref{eq:VPSystem} 
by the finite sum \eqref{eq:truncation_series}.
Let \( ({H}_m)_{m \in \mathbb{N}} \)
be the family of Hermite polynomials \cite{nist},
\[
{H}_m(v) = (-1)^m e^{v^2} \frac{d^m}{dv^m}\left(e^{-v^2}\right),
\]
that is orthogonal on \( (-\infty, \infty) \) with respect to the
Gaussian function $\exp(-v^2)$.
We choose the following basis of normalized scaled time-independent AW Hermite functions \( (\psi_m)_{m \in \mathbb{N}} \):
\begin{equation}\label{eq:hermite_functions}
  \psi_m(v)
= \left( 2^m m! \sqrt{\pi T} \right)^{-\frac12}
  {H}_m\left(\frac{v}{\sqrt{T}}\right) 
  \exp{(-\frac{v^2}{T})},
  \quad m \geq 0,
\end{equation}
The functions \((\psi_m)_{m \in \mathbb{N}}\) satisfy the following recurrence relations:
\begin{equation}\label{eq:recurrence_relation}
\begin{aligned}
  \dfrac{ \partial \psi_{m}(v) }{\partial v}
&=-\sqrt{ \frac{2}T} 
  \sqrt{m+1} \psi_{m+1}; 
\\
  v \, \psi_{m}(v) 
&= \sqrt{ \frac{T}2} 
  \left( \sqrt{m  } \psi_{m-1} 
       + \sqrt{m+1} \psi_{m+1}
  \right).
\end{aligned}
\end{equation}
To obtain an orthogonal system, 
we choose the following weight function
\[
\omega(v) = \exp{(\frac{v^2}{T})}, \quad T > 0,
\]
such that he AW Hermite functions \eqref{eq:hermite_functions} satisfy the following orthogonality
\begin{equation}\label{eq:orthogonality_property}
\int_{\mathbb{R}} \psi_n(v) \psi_m(v) \omega(v) dv = \delta_{nm}.
\end{equation}

\subsection{Petrov-Galerkin method}

We take $\psi_m(v)$ as trial function and $\psi_n(v) \omega(v)$ as test function.
Then applying the Petrov-Galerkin method, 
the Hermite spectral form of the Vlasov equation is obtained using
the orthogonality property \eqref{eq:orthogonality_property}
and the properties of Hermite functions \eqref{eq:recurrence_relation}.
In the expansion~\eqref{eq:truncation_series}, we take $N=\infty$. 
Inserting the expansion~\eqref{eq:truncation_series},
we compute the different terms of the Vlasov equation in~\eqref{eq:VPSystem}.
Since the AW Hermite functions are not dependent on the variable $t$, 
the time derivative term is simply given by
\[
\dfrac{\partial f}{\partial t} = \sum_{m=0}^\infty \partial_t u_m \psi_m.
\]
The transport term is
\[
  v \dfrac{\partial f}{\partial x}
= \sum_{m=0}^\infty \sqrt{\frac{T}2} \partial_x u_{m}
  \left( \sqrt{m+1} \psi_{m+1}
       + \sqrt{m}   \psi_{m-1} \right), 
\]
and the nonlinear term is
\[
  E \dfrac{\partial f}{\partial v}
= -\sum_{m=0}^\infty \sqrt{\frac{2}T} E \sqrt{m}u_{m-1}
\]
Then, we arrive at the following form, for any $t \geq 0$,
\[
\begin{aligned}
\partial_t u_m
&+ \sqrt{\frac{T}2}
  \left( \sqrt{m+1} \partial_x  u_{m+1}
       + \sqrt{m}   \partial_x  u_{m-1} \right) \\
&- \sqrt{\frac{2}T} E \sqrt{m}u_{m-1}
= 0,
\quad m \geq 0,
\end{aligned}
\]
with the understanding that $u_m = 0$ for $m < 0$.
This is an infinite system for $(u_n)_{n\in\mathbb{N}}$ and $E$.
The system can also be written in matrix form as
\begin{equation}\label{eq:petrov-galerkin-matrix-form}
  \partial_t U
+ \sqrt{\frac{T}2} B \partial_x U
- \sqrt{\frac{2}T} E D U 
= 0,
\end{equation}
where we define the infinite triangular and sparse matrices $B, D \in \mathbb{R}^{\mathbb{N}\times\mathbb{N}}$
\[
\begin{aligned}
&B = (b_{mn})_{m,n\geq 0}, \quad b_{mn} = \sqrt{m}\,\delta_{m-1,n} + \sqrt{m+1}\,\delta_{m+1,n}, \\
&D = (d_{mn})_{m,n\geq 0}, \quad d_{mn} = \sqrt{m}\,\delta_{m-1,n}.
\end{aligned}
\]
And the infinite vector of unknowns is $U(t,x) = (u_{m}(t,x))_{m\geq 0} \in \mathbb{R}^{\mathbb{N}}$.
The infinite system for $U(t,x)$ and $E(t,x)$ is formally 
equivalent to the VP system~\eqref{eq:VPSystem}.

In the following, we analyze the truncated system and introduce a block decomposition of the infinite matrices and vectors involved. 
For notational convenience, we first introduce the following definitions.
\begin{defi}[Block matrix by truncation $N$]
A block matrix by truncation $N$ of a infinite matrix $Q \in \mathbb{R}^{\mathbb{N}\times\mathbb{N}}$,
which is partitioned into a collection of $2 \times 2$ smaller matrices
$$
\renewcommand{\arraystretch}{1.5}
Q =
\begin{bmatrix}
Q_{11}^N & Q_{12}^N \\
Q_{21}^N & Q_{22}^N \\
\end{bmatrix},
$$
where
$$
\begin{aligned}
Q_{11}^N &= (q_{nm})_{0\leq n,m \leq N}, \\
Q_{12}^N &= (q_{nm})_{0\leq n \leq N, \,N+1\leq m \leq \infty}, \\
Q_{21}^N &= (q_{nm})_{N+1\leq n \leq \infty, \,0\leq m \leq N}, \\
Q_{22}^N &= (q_{nm})_{N+1\leq n,m \leq \infty}. \\
\end{aligned}
$$
Unless otherwise specified, 
all block matrices $Q_{11}, Q_{12}, Q_{21}, Q_{22}$ 
are understood to be associated with the truncation $N$, 
and the superscript $\cdot^N$ is omitted throughout to simplify the notation.
\end{defi}
\begin{defi}[Block vector by truncation $N$]
A block vector by truncation $N$ of a infinite vector $X \in \mathbb{R}^{\mathbb{N}}$,
which is partitioned into a collection of $2$ smaller vectors 
$$
\renewcommand{\arraystretch}{1.5}
X =
\begin{bmatrix}
X_{1}^N \\
X_{2}^N \\
\end{bmatrix},
$$
where
$$
\begin{aligned}
X_{1}^N &= (x_{n})_{0\leq n \leq N}, \\
X_{2}^N &= (x_{n})_{N+1\leq n \leq \infty}, \\
\end{aligned}
$$
Unless otherwise specified, 
all block vectors $X_{1}, X_{2}$ 
are understood to be associated with the truncation $N$, 
and the superscript $\cdot^N$ is omitted throughout to simplify the notation.
\end{defi}

\subsection{Origin of the numerical instability}
For the sake of simplicity,  we set the reference temperature to \(T=1\).
Then we take the block matrix $B$, $D$ and the block vector $U$ 
and further assume that $U_2$ is null vector.
Applying the Petrov-Galerkin method leads to the equations of the Vlasov equation
\[
   \dfrac{\partial U_1}{\partial t} 
+ \sqrt{\frac{T}2} B_{11}\dfrac{\partial U_1}{\partial x}
- \sqrt{\frac{2}T} E(t,x) D_{11} U_1
=0.
\]
We now consider the case of a constant electric field, $E(t,x)=-\sqrt{\frac{T}2}$
and prescribe the initial condition $U_1(0,x) = (1, 0, 0,\dots)^\top$.
Since the initial condition is independent of \(x\), 
the solution remains spatially homogeneous. 
Consequently, the transport term involving 
the symmetric matrix \(B_{11}\) vanishes, 
and the system reduces to the ordinary differential equation
\[
   \dfrac{\partial U_1}{\partial t} 
+ D_{11} U_1
=0.
\]
Since the matrix $D_{11}$ is nilpotent, 
so the solution $U_1(t) = e^{-t D_{11}}\,U_1(0)$ is a finite polynomial in $t$,
which gives
\[
u_n(t) = (-1)^n \dfrac{t^n}{\sqrt{n!}}.
\]
So the distribution function $f$ is given by
\[
f_N(t,v) = \sum_{n=0}^N (-1)^n \dfrac{t^n}{\sqrt{n!}} \psi_n(v).
\]
If $N$ is finite, we have 
\[
\|U_1(t)\|_{L_\omega^2}^2 = \sum_{n=0}^N \dfrac{t^{2n}}{n!}.
\]
For large $t$, we have 
\begin{equation}\label{eqn:norm_truncated_moments}
\|U_1(t)\|_{L_\omega^2}^2 = \dfrac{|t|^{N}}{\sqrt{N!}}.
\end{equation}
Thus the numerical solution $f_N$ blows up. 
If $N=\infty$, we have
\[
f_N(t,v) = \sum_{n=0}^\infty (-1)^n \dfrac{t^n}{\sqrt{n!}} \psi_n(v).
\]
Inserting the basis functions
\[
\psi_n(v) = \dfrac{1}{\sqrt{2^n n!}} {H}_n(v) \dfrac{1}{\sqrt{\pi}} e^{-v^2}
\]
into the infinite series, we obtain
\[
\begin{aligned}
   f(t,v) 
&= \dfrac{e^{-v^2}}{\sqrt{\pi}} 
   \sum_{n=0}^\infty \dfrac{(-1)^n t^n}{\sqrt{n!}} \dfrac{H_n(v)}{\sqrt{2^n n!}} \\
&= \dfrac{e^{-v^2}}{\sqrt{\pi}}
   \sum_{n=0}^\infty \dfrac{H_n(v)}{n!} \left( \dfrac{-t}{\sqrt{2}} \right)^2.
\end{aligned}
\]
Using the generating formula 
\[
\sum_{n=0}^\infty \dfrac{H_n(v)}{n!} s^n = e^{2vs -s^2},
\]
we obtain
\[
  f(t,v) 
= \dfrac{1}{\sqrt{\pi}} \exp{\left(-\left(v+\dfrac{t}{\sqrt2}\right)^2\right)}. 
\]
We complicated infinite-dimensional ODE system simply corresponds to 
a Gaussian translating in velocity $v$.
Specifically,
\begin{equation}\label{eqn:solution_boundedness}
f(t,v) = f_0\left(\dfrac{t}{\sqrt2} + v\right).
\end{equation}
Since the initial condition is $f(0, v) = \dfrac{1}{\sqrt\pi} e^{-v^2}$, 
the evolution is a rigid shift to the left with speed $1/\sqrt2$.

Equation~\eqref{eqn:norm_truncated_moments} 
shows that truncating the Hermite expansion to a finite number of moments
induces numerical instability. 
As a consequence, the bounded solution~\eqref{eqn:solution_boundedness} 
is no longer preserved by the truncated moment system.
This observation suggests that the numerical instability 
originates from the truncation of the matrix \(D\).

\subsection{Gram matrix}

We introduce the following definition which is yield by 
the collection of all scalar products of the AW Hermite functions.
\begin{defi}[Gram matrix]
The infinite symmetric Gram matrix 
$A = A^{\top} = (a_{mn})_{m,n \geq 0} \in \mathbb{R}^{\mathbb{N} \times \mathbb{N}}$ of 
the problem is the collection of all scalar products of the AW 
Hermite functions. The coefficient is 
$$
a_{mn} = \int_{\mathbb{R}} \psi_m(v) \psi_n(v) dv.
$$
\end{defi}
For simplicity, we set $T=2$ throughout this subsection.
Assume that the initial condition satisfies $f(0)\in L_\omega^2$. 
Since $L_\omega^2\subset L^2$, 
it follows that $f(0)\in L^2$. 
Any solution of the equation~\eqref{eq:petrov-galerkin-matrix-form} preserves the $L^2$-norm, namely,
\[
\frac{d}{dt}\|f(t)\|_{L^2}^2 = 0.
\]
Indeed, this follows from the identity
\begin{equation}\label{eq:skew-symmetric-identity}
\begin{aligned}
   \frac12  \frac d{dt} \|f(t)\|_{L^2}^2  
&= \frac12  \frac d{dt} \int_0^L \left< U, AU \right> dx \\
&= \int_0^L \left< U, A \frac \partial {\partial t} U \right> dx \\
&=-\int_0^L \left< U, AB\frac \partial {\partial x} U \right> dx 
  +\int_0^L \left< U, EADU \right> dx \\
&=0,
\end{aligned}
\end{equation}
where the last equality follows from the skew-symmetry relation
\(
AD + D^\top A=0,
\)
proved in \cite[Lemma 3.3]{dai2024quadratic}, 
together with periodic boundary conditions. 
Here, \(\langle\cdot,\cdot\rangle\) denotes the standard 
Euclidean inner product on the corresponding vector space.

For the truncated system of~\eqref{eq:petrov-galerkin-matrix-form}
\[
\begin{aligned}
   \frac12  \frac d{dt} \|f_N(t)\|_{L^2}^2  
=& \frac12  \frac d{dt} \int_0^L \left< U_1, A_{11}U_1 \right> dx \\
=& \int_0^L \left< U_1, A_{11} \frac \partial {\partial t} U_1 \right> dx \\
=&-\int_0^L \left< U_1, A_{11}B_{11}
   \frac \partial {\partial x} U_1 \right> dx \\ 
 &+\int_0^L \left< U_1, E_N A_{11}D_{11}
   U_1 \right> dx \\
\neq&\,0,
\end{aligned}
\]
since \(A_{11}D_{11} + D_{11}^\top A_{11} \neq 0\) 
by~\cite[Lemma 3.4]{dai2024quadratic}.
Hence, the truncation to a finite number of moments 
does not preserve the skew-symmetric structure 
shown in~\eqref{eq:skew-symmetric-identity}. 
Consequently, the approximation of the VP system 
no longer satisfies the preservation 
of the energy~\eqref{eq:skew-symmetric-identity}.

\subsection{Galerkin method}
To apply the Galerkin method to the Valsov equation, 
we take $\psi_m(v)$ as trial and test functions.
Contrary to the Petrov-Galerkin method, 
the orthogonality property \eqref{eq:orthogonality_property} 
is not available anymore
to the Hermite spectral form of the Valsov equation.
As a result, a nontrivial Gram matrix arises.

We derive a new Hermite spectral form of the Vlasov equation 
using the Galerkin method. 
We obtain a new evolution equation for $u_m$, $m \in \mathbb{N}$:
\begin{equation}\label{eq:modelFaible}
\begin{aligned}
   \sum_{m=0}^{\infty} a_{nm} \partial_t u_m 
+& \sqrt{\frac{T}2}
   \sum_{m=0}^{\infty} 
   \left( \sqrt{m}  a_{n,m-1}
	+ \sqrt{m+1}a_{n,m+1}
   \right) \partial_x u_m \\
-& \sqrt{ \frac{2}T} E
   \sum_{m=0}^{\infty} \sqrt{m} a_{n,m-1} u_m
=0.
\end{aligned}
\end{equation}
Here, we take $N=\infty$ in the expansion~\eqref{eq:modelFaible}, 
which is formally equivalent to the Vlasov equation of the system~\eqref{eq:VPSystem}.
The system can also be written in matrix form as
\[
  A \partial_t U
+ \sqrt{\frac{T}2} A B \partial_x U
- \sqrt{\frac{2}T} E A D U
= 0.
\]
If we take $N$ a finite number, we obtain a truncated system as follow
\begin{equation}\label{eq:galerkinMethodTruncated}
\begin{aligned}
   \sum_{m=0}^{N} a_{nm} \partial_t u_m 
+& \sqrt{\frac{T}2}
   \sum_{m=0}^{N} 
   \left( \sqrt{m}  a_{n,m-1}
	+ \sqrt{m+1}a_{n,m+1}
   \right) \partial_x u_m \\
-& \sqrt{ \frac{2}T} E_N
   \sum_{m=0}^{N} \sqrt{m} a_{n,m-1} u_m
=0.
\end{aligned}
\end{equation}
The self-consistent electric field $E_N$ is determined by the Poisson equation.
The system can be rewritten in matrix form 
\[
\begin{aligned}
 A_{11} \partial_t U_1
&+ \sqrt{\frac{T}2}     \left( A_{11}B_{11} + A_{12}B_{21} \right) \partial_x U_1 \\
&- \sqrt{\frac{2}T} E_N \left( A_{11}D_{11} + A_{12}D_{21} \right) U_1
 = 0.
\end{aligned}
\]
However, since \(A_{11}\) is a dense matrix, this formulation leads to a significantly higher computational cost.

\subsection{Equivalent form}
In what follows, we address this computational challenge.
\begin{defi}[Gram kernel matrix]\label{defi:gram_kernel_matrix}
The infinite Gram kernel matrix 
$Z = (z_{mn})_{m,n \geq 0} \in \mathbb{R}^{\mathbb{N} \times \mathbb{N}}$ of the problem 
is an upper triangular matrix of which the entries are
\[
\left\{
\begin{aligned}
&  z_{mn} = \sqrt{\dfrac{n!}{m!}} 
   \dfrac{1}{4^{\frac{n-m}2}(\frac{n-m}2)!},
&& n-m \in 2\mathbb{N},  \\
&  z_{mn} = 0,
&& \text{otherwise}.\\
\end{aligned}
\right.
\]
\end{defi}
\begin{Remark}
The entries on the main diagonal of the Gram kernel matrix are equal to $1$.
\end{Remark}
\begin{proposition}\label{prop:relation_interesting}
One has
\begin{equation}\label{eq:relationInteresting}
\sum_{m=0}^{N+1} a_{nm}\, z_{m,N+1}   = 0, \quad \, 0\leq n \leq N, 
\end{equation}
where $a_{nm}, z_{m,N+1}$ are the entriex of the Gram matrix and the Gram kernel matrix, respectively.
\end{proposition}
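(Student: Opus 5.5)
The plan is to identify the function whose Hermite coefficients are the last column of $Z$ and to show it is $L^2$-orthogonal to every polynomial-type $\psi_n$ with $n\le N$. As in the previous subsections I work with $T=2$, so that
\[
\psi_m(v)=\bigl(2^m m!\sqrt{2\pi}\bigr)^{-\frac12}H_m\!\left(\tfrac{v}{\sqrt2}\right)e^{-\frac{v^2}{2}}.
\]
Since $a_{nm}=\int_{\mathbb{R}}\psi_n\psi_m\,dv$ and the sum in \eqref{eq:relationInteresting} is finite, linearity gives
\[
\sum_{m=0}^{N+1}a_{nm}z_{m,N+1}=\int_{\mathbb{R}}\psi_n(v)\,g(v)\,dv,
\qquad g(v):=\sum_{m=0}^{N+1}z_{m,N+1}\psi_m(v).
\]
It therefore suffices to show that $\int_{\mathbb{R}}\psi_n\, g\,dv=0$ for $0\le n\le N$.

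The key step is to recognize $g$ in closed form. I claim that
\[
g(v)=C_{N+1}\,H_{N+1}(v)\,e^{-\frac{v^2}{2}},
\]
where $H_{N+1}$ is evaluated at the unscaled variable $v$ and $C_{N+1}$ is a constant depending only on $N$. To prove this, I would use the classical Hermite scaling (multiplication) formula
\[
H_n(\lambda x)=\sum_{k=0}^{\lfloor n/2\rfloor}\lambda^{n-2k}(\lambda^2-1)^k\frac{n!}{k!\,(n-2k)!}H_{n-2k}(x).
\]
I would either quote it or derive it quickly from the generating function $\sum_n H_n(x)s^n/n!=e^{2xs-s^2}$ already recalled in the paper. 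Taking $\lambda=\sqrt2$ and $x=v/\sqrt2$ gives
\[
H_n(v)=\sum_k 2^{\frac{n-2k}{2}}\frac{n!}{k!\,(n-2k)!}H_{n-2k}\!\left(\tfrac{v}{\sqrt2}\right).
\]

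Next I compare this expansion with $\sum_m z_{mn}\psi_m$, where only the indices $m=n-2k$ contribute. Substituting Definition~\ref{defi:gram_kernel_matrix} and the normalization of $\psi_m$, the coefficient of $H_m(v/\sqrt2)e^{-v^2/2}$ is
\[
\sqrt{\frac{n!}{m!}}\,\frac{1}{4^k k!}\,\bigl(2^m m!\bigr)^{-\frac12}(2\pi)^{-\frac14}.
\]
Dividing by the coefficient $2^{m/2}n!/(k!\,m!)$ from the scaling formula, the ratio simplifies to $2^{-m}4^{-k}/\sqrt{n!}\cdot(2\pi)^{-1/4}=2^{-n}(2\pi)^{-1/4}/\sqrt{n!}$. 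This ratio is independent of $k$, which gives the claimed identity with $n=N+1$. This bookkeeping of powers of $2$ and factorials is the step I expect to be most delicate. If it failed, the fallback would be to verify the identity directly via generating functions in $s$.

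Finally, for $n\le N$,
\[
\int_{\mathbb{R}}\psi_n g\,dv \;\propto\; \int_{\mathbb{R}} H_n\!\left(\tfrac{v}{\sqrt2}\right)H_{N+1}(v)\,e^{-v^2}\,dv .
\]
Here $H_n(v/\sqrt2)$ is a polynomial of degree $n\le N$. By the orthogonality of the Hermite polynomials with respect to $e^{-v^2}$, $H_{N+1}$ is orthogonal to every polynomial of degree at most $N$. Hence the integral vanishes, which proves \eqref{eq:relationInteresting}.
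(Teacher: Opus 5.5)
Your proof is correct, and it takes a genuinely different route from the paper's. The paper works purely with coefficients. It first derives the closed form of $a_{nm}$ (Theorem~\ref{prop:4.1}, from the shift relation \eqref{eq:b40-rab} and Lemma~\ref{lem:4.2}). It then inserts this together with $z_{m,N+1}$ into the sum, reducing \eqref{eq:relationInteresting} to the binomial identity $S(N,n)=-\binom{n+N+1}{n}$ of Lemma~\ref{lemma:identity_S_N_n}. That identity is proved by writing the summand as $(-1)^k\binom{M}{k}$ times a polynomial in $k$ of degree $<M$, so that the alternating sum vanishes; only the case $N$ even is written out.

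You bypass the Gram entries entirely. Your bookkeeping showing that $\sum_m z_{m,N+1}\psi_m$ is a multiple of $H_{N+1}(v)e^{-v^2/2}$ is right; for instance $z_{02}\psi_0+\psi_2=(2\pi)^{-1/4}(4\sqrt2)^{-1}H_2(v)e^{-v^2/2}$. After that the conclusion is just classical Hermite orthogonality. What this buys:
\begin{itemize}
\item No normalization constants of $a_{nm}$ enter.
\item Both parities of $N$, and the trivial case $n+N+1$ odd, are covered at once.
\item Applying the same argument to every column of $Z$ shows that $Z^{\top}AZ$ is diagonal. So $Z$ is exactly the unit upper-triangular change of basis produced by $L^2$ Gram--Schmidt on $\psi_0,\psi_1,\dots$, which lands on the standard (symmetrically weighted) Hermite functions. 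This explains where the peculiar entries of $Z$ come from.
\end{itemize}
The paper's computation, in exchange, yields the explicit Gram matrix, which is useful in its own right.

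One line you should add is a justification of $T=2$, since the proposition is used with general $T$. $Z$ is $T$-independent. The substitution $v=\sqrt{T/2}\,w$ gives $\psi_m(v)=(2/T)^{1/4}\psi_m^{(2)}(w)$, where $\psi_m^{(2)}$ are the $T=2$ functions, so $a_{nm}$ is in fact independent of $T$. Alternatively, run your argument with $x=v/\sqrt T$ to get $g\propto H_{N+1}(\sqrt{2/T}\,v)e^{-v^2/T}$ and change variables at the end.

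Incidentally, this shows the prefactor $(\pi T)^{-1/2}$ in Theorem~\ref{prop:4.1} is spurious. In the proof of Lemma~\ref{lem:4.2}, $\psi_0\psi_m$ carries $(\pi T)^{-1/2}$, not $(\pi T)^{-1}$. Being an overall factor, it does not affect \eqref{eq:relationInteresting}.
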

Substituting the relation~\eqref{eq:relationInteresting} to 
the evolution equation~\eqref{eq:modelFaible},
we obtain
\[
\begin{aligned}
 & \sum_{m=0}^{N} a_{nm} \partial_t u_m \\
+& \sqrt{\frac{T}2} \sum_{m=0}^{N} a_{nm}
   \left( \sqrt{m}  \partial_x u_{m-1}
        + \sqrt{m+1}\partial_x u_{m+1}
	- z_{m,N+1} \sqrt{N+1} \partial_x u_N   
   \right) \\ 
-& \sqrt{\frac{2}T} E_N \sum_{m=0}^{N} a_{nm}
   \left( \sqrt{m}  u_{m-1} 
	- z_{m,N+1} \sqrt{N+1} u_N  
   \right) = 0. \\
\end{aligned}
\]
Equivalently:
\begin{equation}\label{eq:method_galerkin_vlasov_velocity_space}
\begin{aligned}
   \partial_t u_m 
+& \sqrt{\frac{T}2}
   \left( \sqrt{m}        \partial_x u_{m-1}
	+ \sqrt{m+1}      \partial_x u_{m+1}
	- z_{m,N+1} \sqrt{N+1}\partial_x u_N 
   \right) \\
-& \sqrt{\frac{2}T} E_N 
   \left( \sqrt{m} u_{m-1}
	- z_{m,N+1} \sqrt{N+1} u_N 
   \right)
= 0, \quad m = 0, \ldots, N,\\
\end{aligned}
\end{equation}
with the convention that $u_n = 0$ for $n<0$ and $n>N$.
For the Poisson equation, we note that the density $\rho_N$ satisfies 
\[
\rho_N = \int_{\mathbb{R}} f_N dv = u_0.
\]
Then we have
\begin{equation}\label{eq:method_galerkin_poisson_velocity_space}
\dfrac{\partial E_N}{ \partial x} = u_0 - \rho_0.
\end{equation}
\begin{Remark}
Writing the system in matrix form, we have
\[
  \partial_t U_1
+ \sqrt{\frac{T}2}     \overline{B}_{11} \partial_x U_1
- \sqrt{\frac{2}T} E_N \overline{D}_{11} U_1
= 0,
\]
where
\[
\begin{aligned}
\overline{B}_{11} &= B_{11} + A_{11}^{-1} A_{12} B_{21}, \\
\overline{D}_{11} &= D_{11} + A_{11}^{-1} A_{12} D_{21}. \\
\end{aligned}
\]
The terms \(A_{11}^{-1} A_{12} B_{21}\) 
and \(A_{11}^{-1} A_{12} D_{21}\) can be 
expressed by \( (z_{m, N+1})_{0\le m \le N+1} \) 
given by Proposition~\ref{prop:relation_interesting}. 
For example, 
if \(N\) is even, the matrices \(\overline{B}_{11}\) and \(\overline{D}_{11}\) are given by
\[
\renewcommand{\arraystretch}{1.8}
\overline{B}_{11}
=
\begin{bmatrix}
0      & \sqrt1 &        &            &            & \\
\sqrt1 & 0      & \sqrt2 &            &            & \sqrt{N+1} z_{1,N+1} \\
       & \sqrt2 & 0      & \ddots     &            & \\
       &        & \ddots & 0          & \sqrt{N-1} & \\
       &        &        & \sqrt{N-1} & 0          & \sqrt{N} + \sqrt{N+1} z_{N-1,N+1} \\
       &        &        &            & \sqrt{N}   & 0 \\
\end{bmatrix},
\]
and
\[
\renewcommand{\arraystretch}{1.8}
\overline{D}_{11}
=
\begin{bmatrix}
0      &        &        &            &          & \\
\sqrt1 & 0      &        &            &          & \sqrt{N+1} z_{1,N+1} \\
       & \sqrt2 & 0      &            &          & \\
       &        & \ddots & 0          &          & \\
       &        &        & \sqrt{N-1} & 0        & \sqrt{N+1} z_{N-1,N+1} \\
       &        &        &            & \sqrt{N} & 0 \\
\end{bmatrix}.
\]
If \(N\) is odd, the matrices \(\overline{B}_{11}\) and \(\overline{D}_{11}\) have the same structure, 
except that in the last column the nonzero entries occur at even-indexed rows, while the odd-indexed rows are zero.

From a numerical standpoint, the Galerkin method introduces only a single additional column into the matrix. 
Consequently, the resulting increase in computational cost remains modest and compares favorably 
with that of the Petrov–Galerkin method.
\end{Remark}

\section{Fourier spectral method for the space discretization}
\label{sec:Fourier spectral method for the space discretization}
We consider a Fourier spectral discretization in space combined with
AW Hermite functions in velocity.
Our goal is to find an approximation $f_{N,J}$ of the distribution
function $f$, defined by
\begin{equation}\label{eq:fourierHermiteExpansion}
f_{N,J}(t,x,v)
=
\sum_{m=0}^{N}
\sum_{j=-J}^{J}
c_{m,j}(t)\,\xi_j(x)\,\psi_m(v),
\end{equation}
where
\begin{equation}
   \xi_j(x)
:= \frac{1}{\sqrt L} e^{2 \imath \pi j x/L},
   \qquad j=-J,\ldots,J,
\end{equation}
and $(\psi_m)_{m\ge0}$ denotes the family of AW Hermite functions.
This corresponds to approximate the Hermite modes $u_{m}(t,x)$ as
\[
u_{m,J}(t,x) = \sum_{j=-J}^{J} c_{m,j}(t)\,\xi_j(x).
\]
The electric field $E_{N}$ is similarly approximated in the Fourier basis as:
\begin{equation}\label{eq:electricFieldFourier}
E_{N,J}(t, x) = \sum_{j=-J}^{J} e_j(t) \xi_j(x).
\end{equation}
We derive equations governing the coefficients $c_{m,j}$ and $e_j$
by examining the weak formulation for $u_{m,J}$ and $E_{N,J}$ in the space
\[
V_{J} := \text{span} \{ \xi_j, j = -J, \ldots, J \},
\]
and taking $(\xi_j)_{-J\leq j\leq J}$ as test functions.

To finalize the method's formulation, we need to compute
the coefficients of the electric field in the Fourier basis.
Using the representation of the distribution function in the Fourier-Hermite
discretization, we obtain
\[
\rho_{N,J}(t,x) = \sum_{m=0}^N \sum_{j=-J}^J c_{m,j} \xi_j(x).
\]
We need to consider the potential function $\Phi_{N,J}(t,x)$ such that
\[
\left\{
\begin{aligned}
E_{N,J} &= -\dfrac{\partial \Phi_{N,J}}{\partial x} , \\
\dfrac{\partial E_{N,J}}{\partial x} &= \rho_{N,J} - \rho_0 .
\end{aligned}
\right.
\]
Hence, we get the one dimensional Poisson equation
\[
-\dfrac{\partial^2 \Phi_{N,J}}{\partial x^2} = \rho_{N,J} - \rho_0 .
\]
Consider the Galerkin method for the Poisson equation
\[
  \int_0^L -\dfrac{\partial^2 \Phi_{N,J}}{\partial x^2} \eta
= \int_0^L (\rho_{N,J} - \rho_0) \eta,
\]
which gives the following Fourier representation of the electric field
\begin{equation}\label{eq:electric_field_modes}
e_j = \left\{
\begin{aligned}
&0, &&j = 0, \\
&\dfrac{-\imath L}{2\pi j} \sum_{m=0}^N c_{m,j} , &&j \neq 0.
\end{aligned}
\right.
\end{equation}
We can also get the Fourier representation of
$\Phi_{N,J} = \sum_{j=-J}^{J} \Phi^j(t) \xi_j(x)$
\[
\Phi^j = \left\{
\begin{aligned}
&0, &&j = 0, \\
&
\left( \dfrac{L}{2\pi j} \right)^2
\sum_{m=0}^N c_{m,j} , &&j \neq 0.
\end{aligned}
\right.
\]
With the Galerkin method in Fourier space,
we get the following system of PDEs for the coefficients:
\begin{equation}\label{eq:galerkin-fourier-hermite}
\begin{aligned}
   \partial_t c_{m,j}
& -\sqrt{\frac{T}2} \frac{2\pi j \imath}{L}
   \left( \sqrt{m}   c_{m-1,j}
	+ \sqrt{m+1} c_{m+1,j}
	- z_{m,N+1} \sqrt{N+1} c_{N,j}
   \right) \\
& -\sqrt{\frac{2}T}
  \left(  \sqrt{m}   [E \ast c_{m-1}][j]
	- z_{m,N+1} \sqrt{N+1} [E \ast c_{N}  ][j]
  \right) = 0,
\end{aligned}
\end{equation}
with the convolution
\[
[E \ast c_m][p] = \frac{1}{\sqrt{L}} \sum_{j=-J}^J e_{p-j} c_{m,j}.
\]

\section{Numerical results}
\label{sec:Numerical results}

We implement a research code in Python to evaluate the Garlekin method 
and compare to the Petrov-Garlekin method. 
We apply a fourth order Runge-Kutta scheme to the Galerkin method with AW and Fourier functions for the Vlasov equation
with Fourier Galerkin approximation of the Poisson equation. 

\subsection{Advection in velocity}

\begin{figure}[h!]
\centering
\begin{tabular}{cc}
  \includegraphics[scale = 0.28]{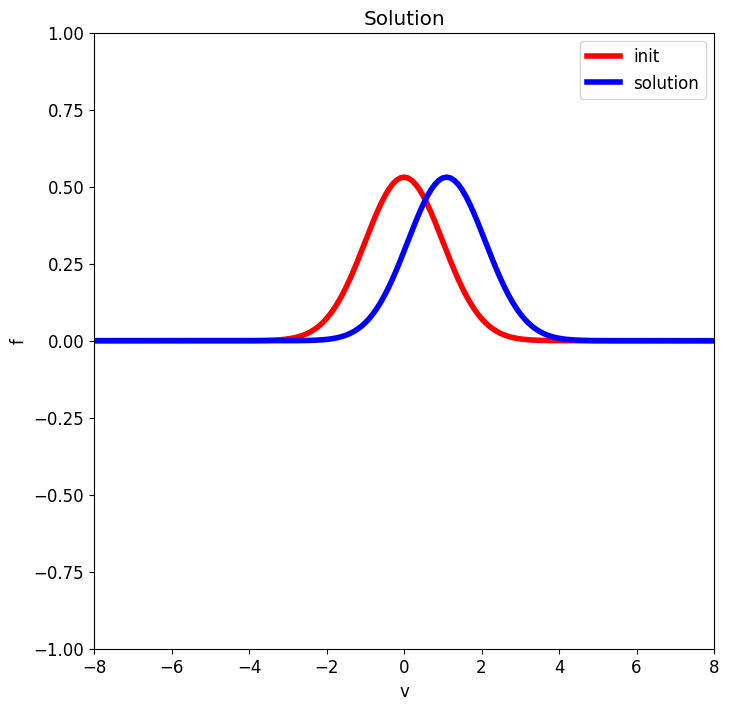} 
& \includegraphics[scale = 0.28]{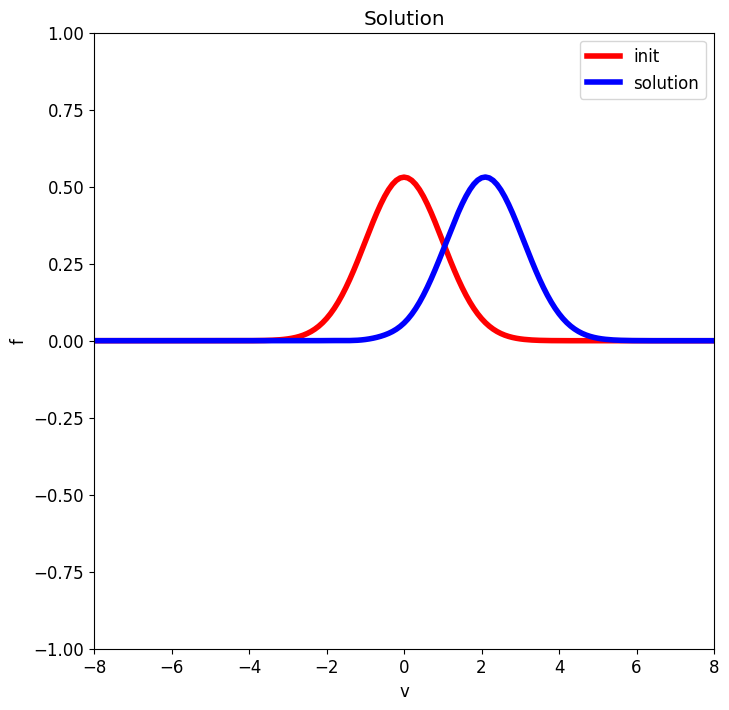} \\
  \includegraphics[scale = 0.28]{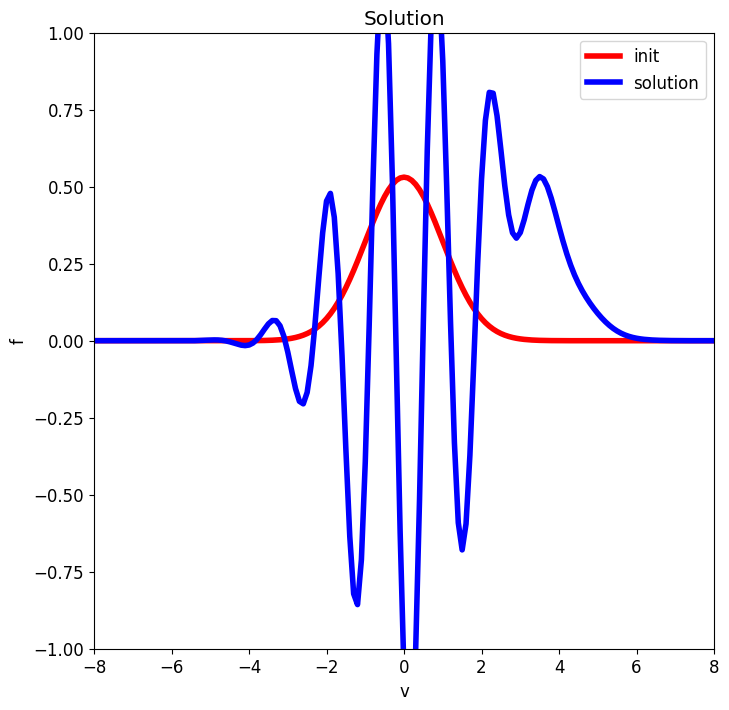}
& \includegraphics[scale = 0.28]{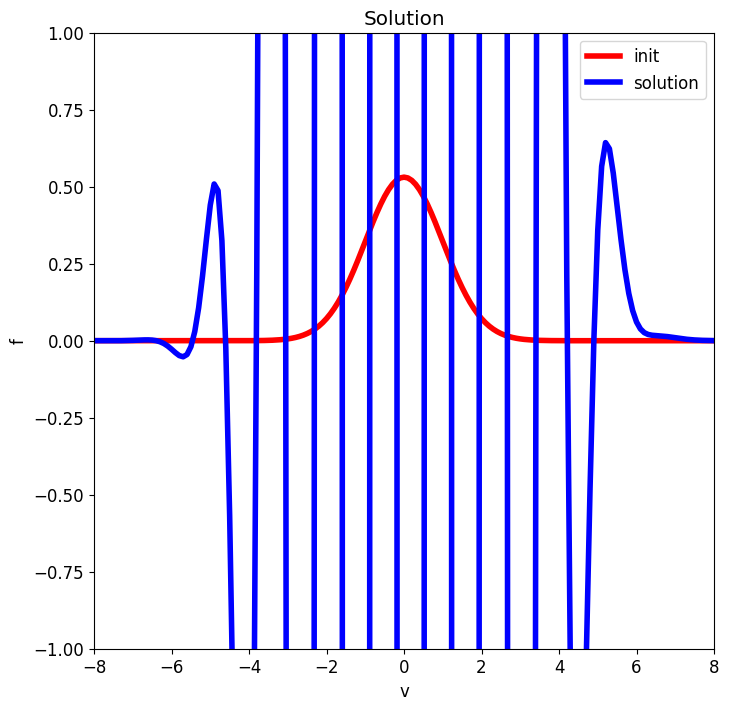}
\end{tabular}
\caption{Results of the advection test of the Petrov-Galerkin method at time $t_1=1$, $t_2=2$, $t_3=3$ then $t_4=4$ ($N=19$ and $\Delta t=0.1$).}
\label{fig:1}
\end{figure}
We consider a simplified case - the advection equation with a constant electric field ($E=-\sqrt{\frac{T}2}$) - to provide a simple example of the instability attached to the Petrov-Garlekin method. Therefore, the Vlasov equation is reduced to 
\[
\partial_t f + \partial_v f = 0.
\]
The discretized system in velocity of the Petrov-Galerkin method is reduced to
\[
  \partial_t u_m
- \sqrt{\frac{2}T} \sqrt{m}u_{m-1}
= 0,
\quad 0 \leq m \leq N.
\]
And the discretized system in velocity of the Galerkin method~\eqref{eq:method_galerkin_vlasov_velocity_space} is reduced to
\[
  \partial_t u_m 
- \sqrt{\frac{2}T} 
  \left( \sqrt{m} u_{m-1}
	+ z_{m,N+1} \sqrt{N+1} u_N 
  \right)
= 0, \quad m = 0, \ldots, N.
\]
The initial data, which is $U=(1,0, 0, \dots)$ and only the first moment is non zero, is a pure Gaussian.
An example of a simulation of the Petrov-Galerkin method is provided 
in Figure \ref{fig:1} at four different time $t_1=2$, $t_2=3$, $t_3=4$ and $t_4=5$. 
Until time $t\approx t_2$, the solution is correct. 
Then a numerical instability starts to be visible for $t\approx t_3$, 
and  blows up exponentially for $t\geq  t_4$.

\begin{figure}[H]
\centering
\begin{tabular}{cc}
  \includegraphics[scale = 0.28]{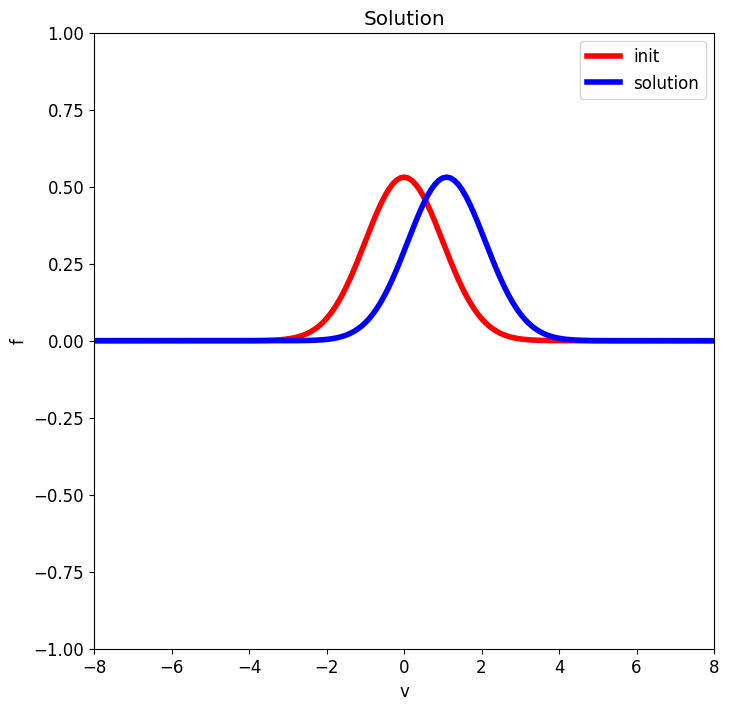}
& \includegraphics[scale = 0.28]{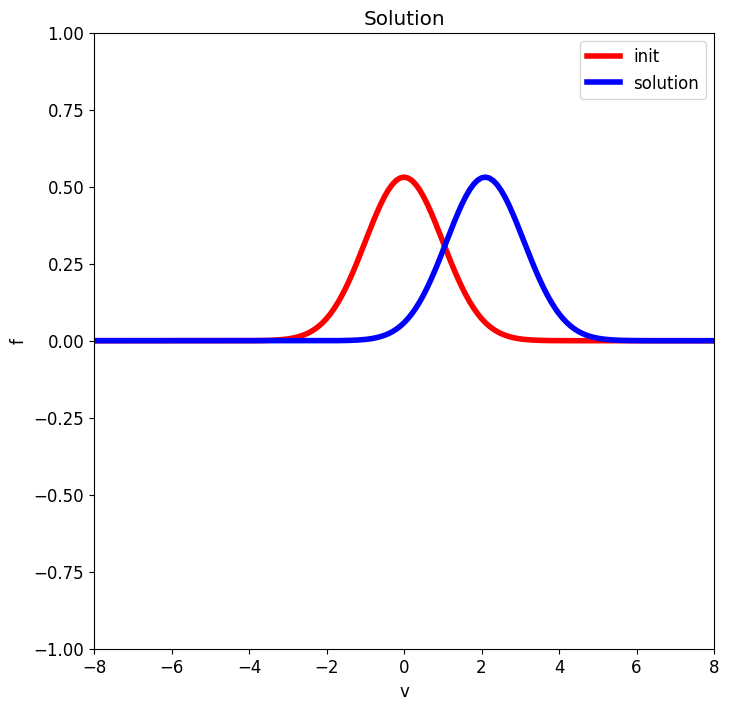} \\
  \includegraphics[scale = 0.28]{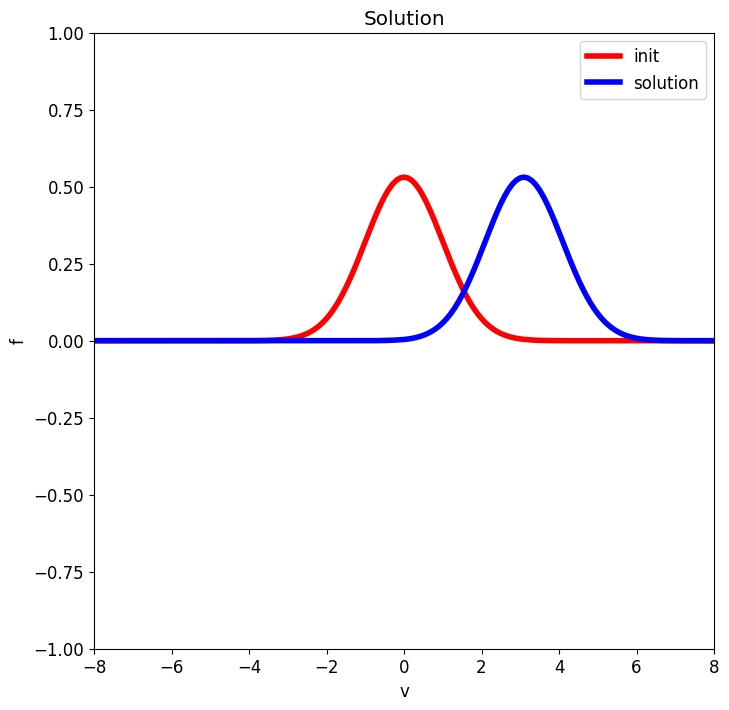}
& \includegraphics[scale = 0.28]{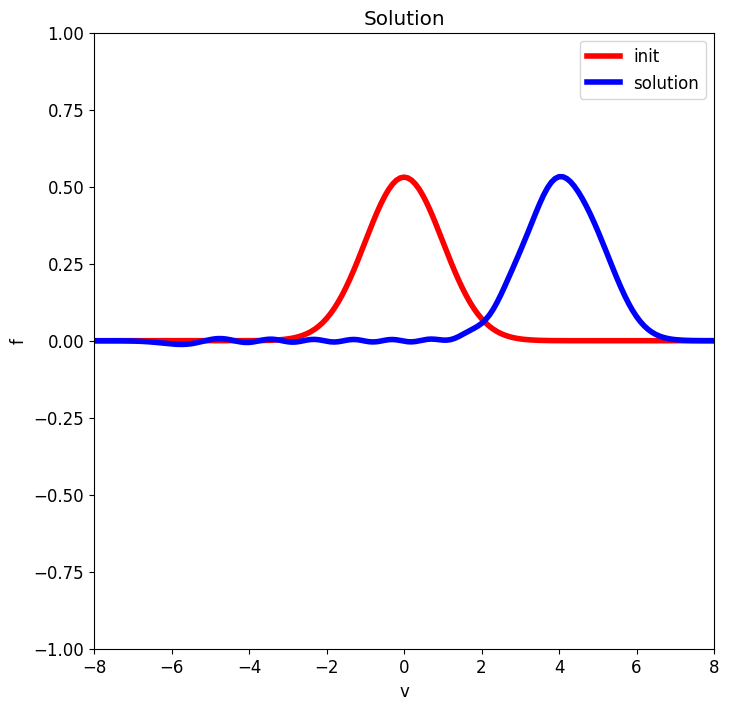}
\end{tabular}
\caption{Results of the advection test the Galerkin method at time $t_1=1$ to $t_4=4$ ($N=19$ and $\Delta t=0.1$).}
\label{fig:2}
\end{figure}
It is clear on the final result that the numerical simulation is spoiled with an important numerical instability 
which is in clear contradiction with the preservation of the quadratic norm.
The instability visible in Figure \ref{fig:1} is a paradox since the initial equation is stable.
The blow-up can be understood from Eq.~\eqref{eqn:norm_truncated_moments}: as t increases, the norm of the truncated moments grows exponentially, leading to a rapid blow-up.

Next, we recalculate the test of Figure~\ref{fig:1} using the Galerkin method with $N=19$, $\Delta t=0.1$ and $T=2$.
A numerical recurrence phenomenon~\cite{mehrenberger2020recurrence} is visible in Figure~\ref{fig:2}. 
The norm of the truncated moments is rigorously constant one time step after the other 
and the proposed method is stable.

\subsection{Two-stream instability}
We take the data of the two stream instability from \cite{filbet2022conservative}.
The initial data is
\[
f_0(x,v)=\frac27 \left( 1+\cos kx + \alpha (\cos 2kx +\cos 3kx) /1.2 \right)(1+v^2) \frac1{\sqrt{2\pi }} e^{-v^2/2}
\]
with $\alpha=0.01$ and $k=0.5$.
We perform tests at $N = 64$. For Fourier spectral discretization, we take $J=16$.
And the time step is $\Delta t = 0.01$.
For this problem only two moments are non zero, which are
\[
\left\{
\begin{aligned}
&u_0(x)= \frac{12}        7\left( 1+\cos kx + \alpha (\cos 2kx +\cos 3kx) /1.2 \right) \\
&u_2(x)= \frac{10\sqrt 2} 7\left( 1+\cos kx + \alpha (\cos 2kx +\cos 3kx) /1.2 \right),
\end{aligned}
\right.
\]
and all other moments vanish.
\begin{figure}[h!]
\centering
\begin{subfigure}[b]{0.45\textwidth}
  \centering
  \includegraphics[width=\textwidth]{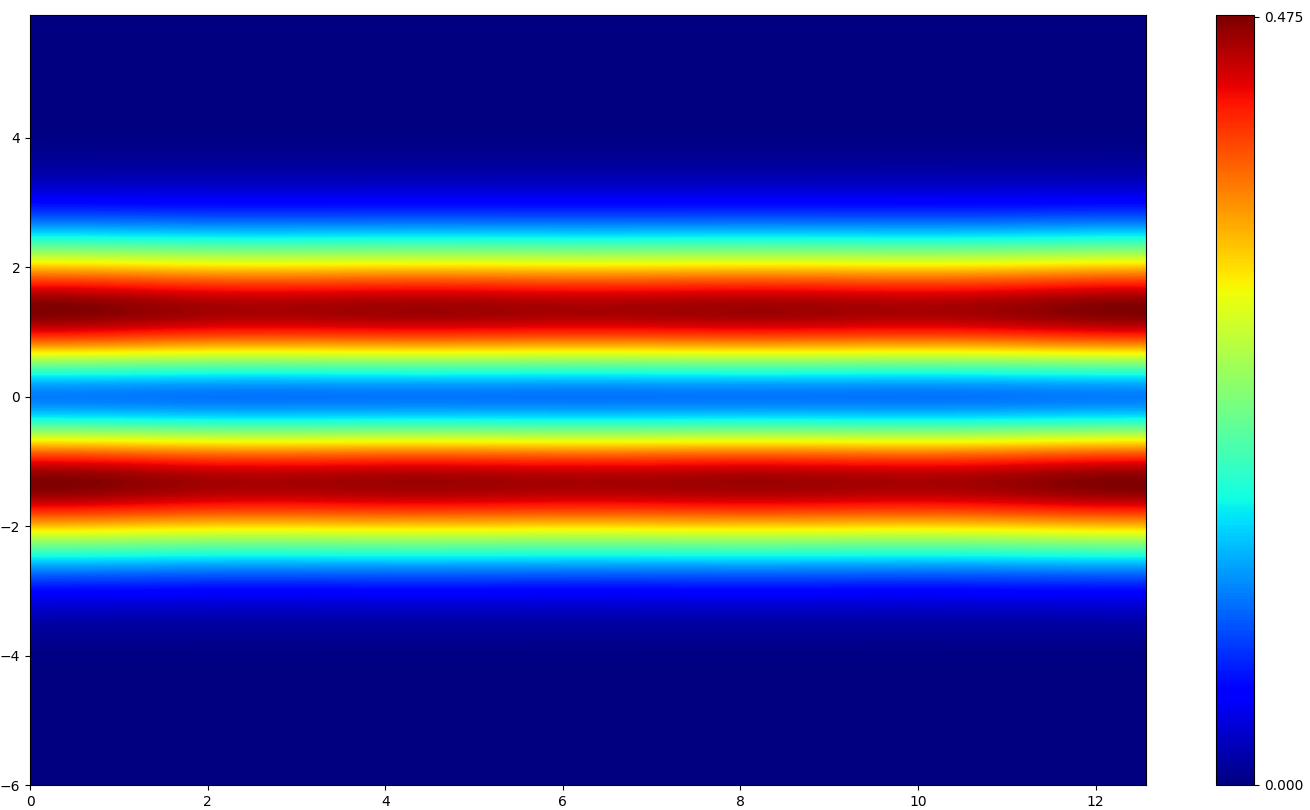}
  \caption{$t=0$}
\end{subfigure}
\hfill
\begin{subfigure}[b]{0.45\textwidth}
  \centering
  \includegraphics[width=\textwidth]{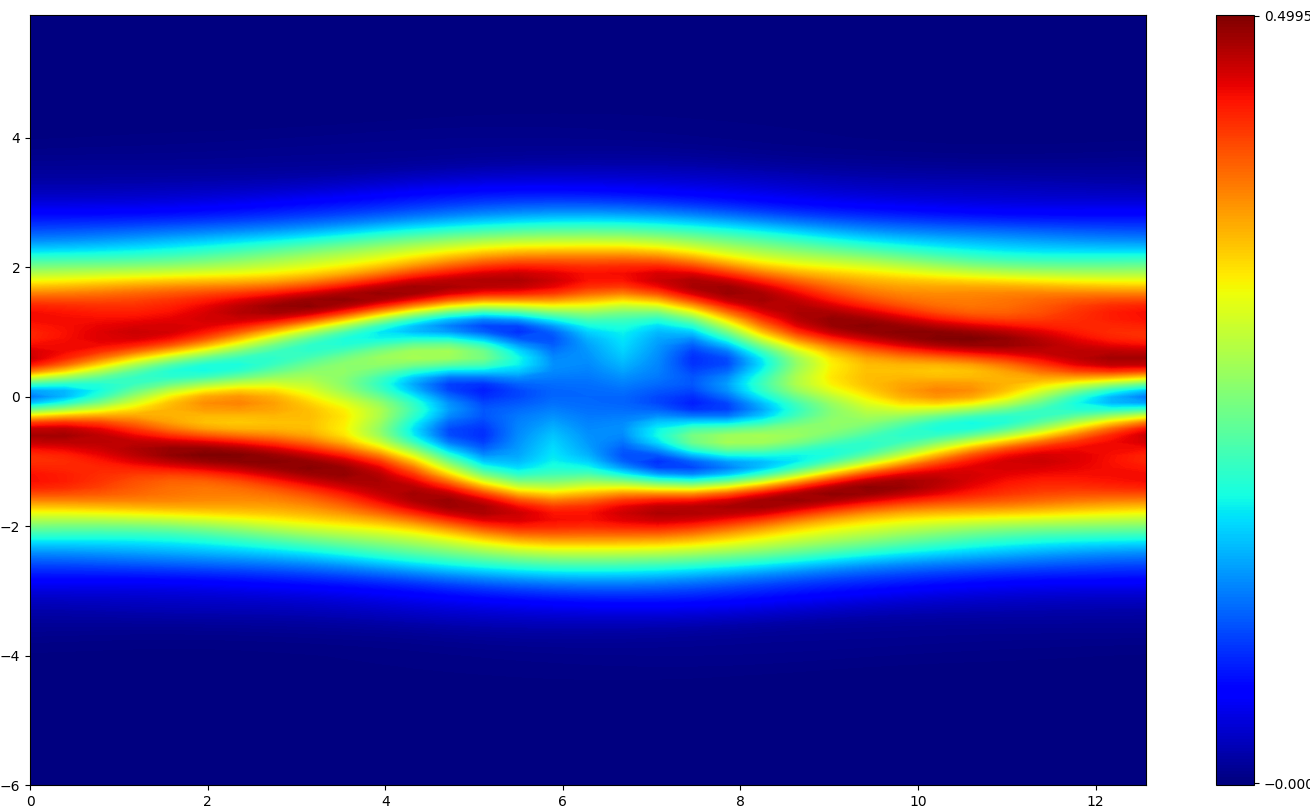}
  \caption{$t=20$}
\end{subfigure}
\caption{Density function for the two-stream instability computed with the Petrov-Galerkin method at times $t=0$ and $t=20$.}
\label{fig:two_stream_pertrov_galerkin}
\end{figure}
\begin{figure}[h!]
\centering
\begin{subfigure}[b]{0.45\textwidth}
  \centering
  \includegraphics[width=\textwidth]{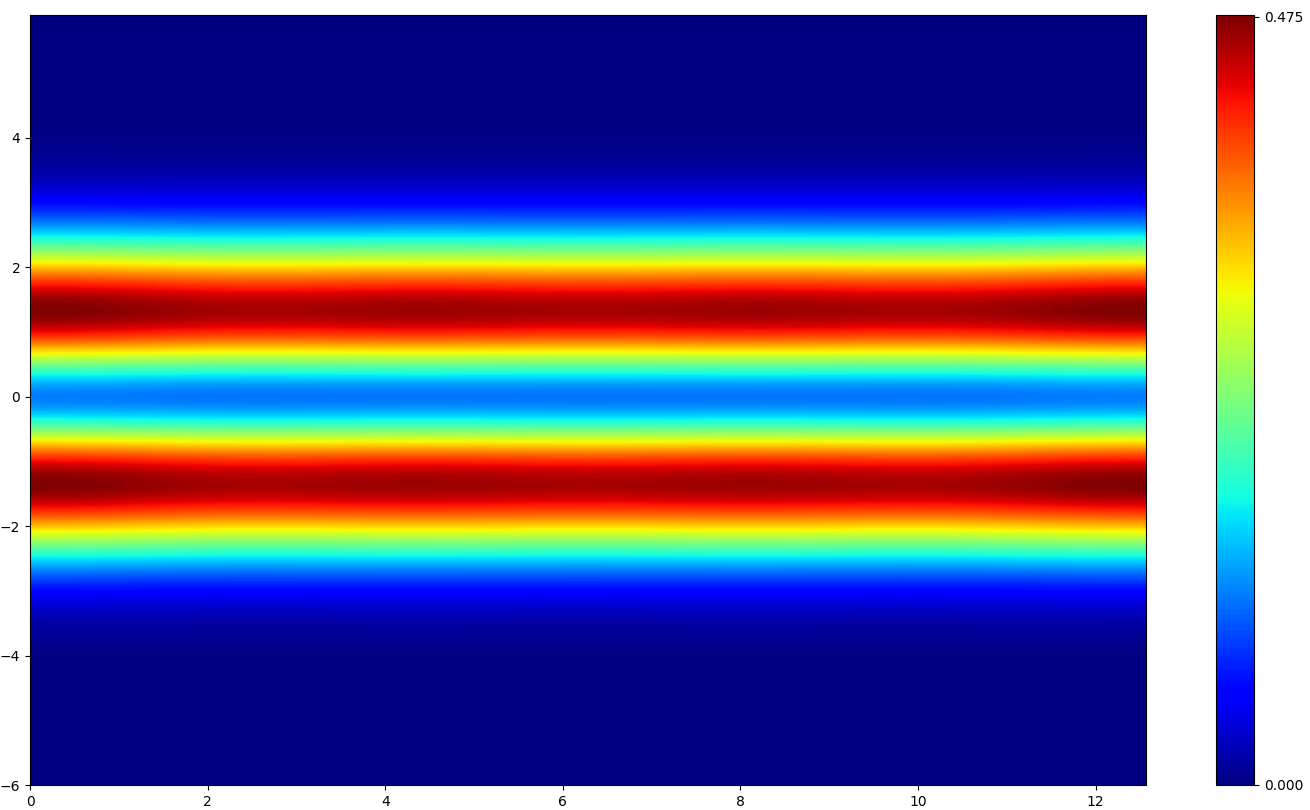}
  \caption{$t=0$}
\end{subfigure}
\hfill
\begin{subfigure}[b]{0.45\textwidth}
  \centering
  \includegraphics[width=\textwidth]{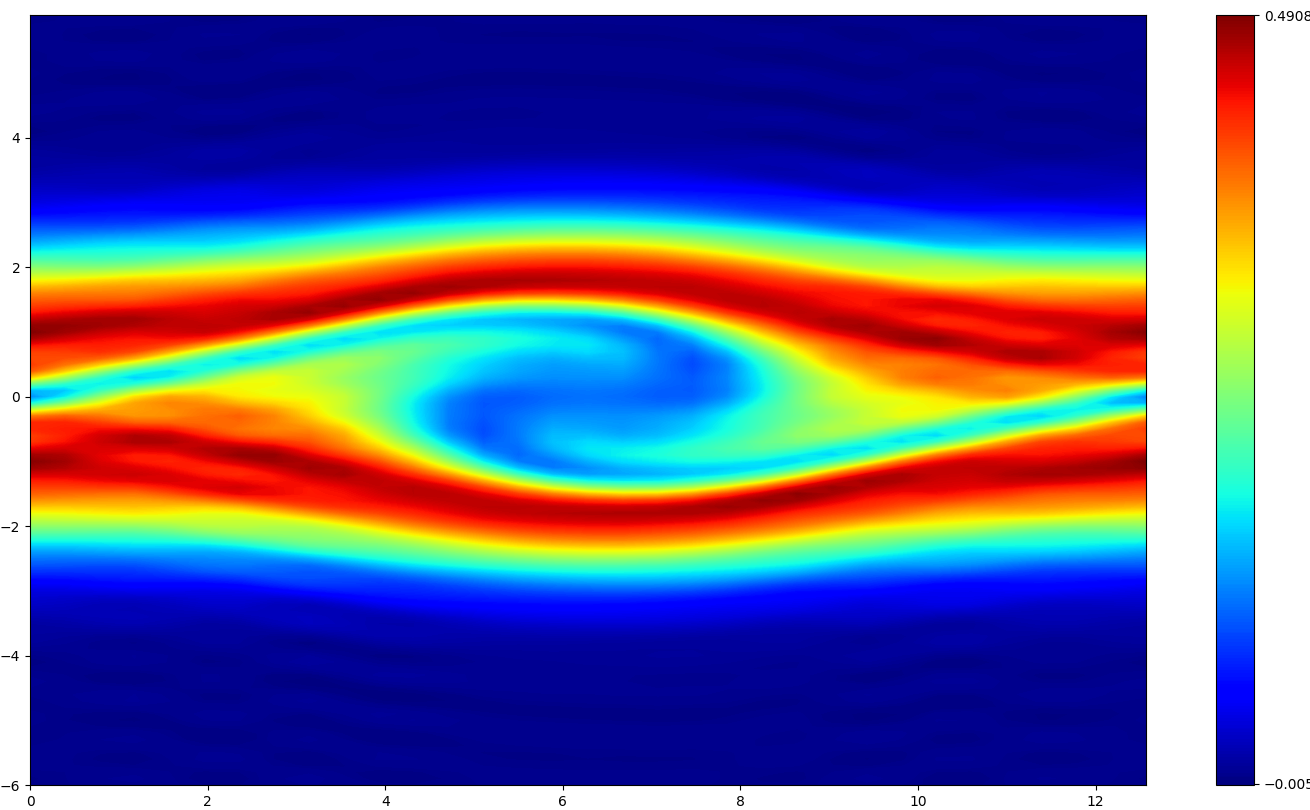}
  \caption{$t=20$}
\end{subfigure}
\caption{Density function for the two-stream instability computed with the Galerkin method at times $t=0$ and $t=20$.}
\label{fig:two_stream_galerkin}
\end{figure}
The results are shown in Figure \ref{fig:two_stream_pertrov_galerkin}, \ref{fig:two_stream_galerkin}.
The density function calculated at time $t=20$ is represented. 
The Galerkin method outperforms the Petrov-Galerkin method, 
as can be seen by comparing the distribution function plots 
in Figures~\ref{fig:two_stream_pertrov_galerkin} and~\ref{fig:two_stream_galerkin}. 
The numerical distribution function obtained with the Petrov-Galerkin method exhibits significantly stronger instability, 
while the Galerkin method maintains greater numerical stability.
In our opinion our numerical results  illustrate 
that the Galerkin method with AW Hermite functions 
has a potential for the computation of such non linear dynamics 
without any post-processing or filtering of the numerical results.

\subsection{Bump-on-tail instability}
We consider the problem of bump-on-tail instability, where the initial distribution function is characterized by a velocity profile that combines a Maxwellian distribution with a high-energy, warm beam. The initial distribution function is given by:
$$
f(0,x,v) = (g_p(v)+g_b(v)) (1+\kappa \cos(knx)),
$$
where $g_p(v)+g_b(v)$ represents the bump-on-tail distribution, defined as:
$$
\left\{
\begin{aligned}
g_p(v) &= \dfrac{n_p}{\sqrt{\pi} v_{th,p}} e^{-v^2/{v^2_{th,p}}}, \\
g_b(v) &= \dfrac{n_b}{\sqrt{\pi} v_{th,b}} e^{-(v-v_{d,b})^2/{v^2_{th,b}}} .
\end{aligned}
\right.
$$
In this expression, the primary ``plasma'' distribution \(g_p(v)\) is characterized by the number density $n_p$ and thermal velocity $v_{th,p}$. The ``bump'' distribution \(g_b(v)\), which introduces the instability, is described by the number density $n_b$, thermal velocity $v_{th,b}$, and drift velocity $v_{d,b}$.
The initail spatial function has cosinusoidal form, 
where $\kappa$ is the perturbation amplitude, 
$k$ is the mode number stimulated, and $L$ is the length.

The function $g_p(v)$, without drift, is exactly represented by the $0$th Hermite function 
with an appropriate $T$. For function $g_b(v)$, with drift, we consider 
the weighted Galerkin projection
\[
g_b(v) \approx \sum_{m=0}^N \langle g_b(v), \psi_m(v)\rangle_w \psi_m(v).
\]
With the definition of the function $\psi_m$, we have
\[
  \langle g_b(v), \psi_m(v)\rangle_w
= \dfrac{n_b}{\sqrt{\pi} v_{th,b}} \dfrac{1}{\sqrt{2^m m!}}
  \int_{\mathbb{R}} H_m(\dfrac{v}{\sqrt{T}}) e^{-\frac{(v-v_{d,b})^2}{v_{th,b}^2}} dv
\]
Denoting $\overline{v}= v/v_{th,b}$, we obtain
\begin{equation}\label{eq:linear_combination_bump_part}
  \langle g_b(v), \psi_m(v)\rangle_w
= \dfrac{n_b}{\sqrt{\pi} v_{th,b}} \dfrac{1}{\sqrt{2^m m!}}
  \int_{\mathbb{R}} H_m(\dfrac{v_{th,b}}{\sqrt{T}} \overline{v})
  e^{-\left(\overline{v}-\frac{v_{d,b}}{v_{th,b}}\right)^2} dv
\end{equation}
To evaluate the integral, we apply the formulas from \cite{gradshteyn2014table} for $\alpha, y \in \mathbb{R}$:
\begin{equation}\label{eq:gradshteyn_table}
\int_{\mathbb{R}} H_m(\alpha x) e^{-(x-y)^2} dx =
\left\{
\begin{aligned}
& \sqrt{\pi} y^m 2^m,
  \quad \alpha = 1, \\ 
& \sqrt{\pi} (1-\alpha^2)^{m/2} H_m \left( \dfrac{\alpha y}{\sqrt{1-\alpha^2}}\right),
  \quad \alpha \neq 1. \\ 
\end{aligned}
\right.
\end{equation}
Inserting \eqref{eq:gradshteyn_table} with \( \alpha = v_{th,b} / \sqrt{T}\), 
and $y = v_{d,b} / v_{th,b}$ into \eqref{eq:linear_combination_bump_part}, we obtain 
\begin{equation}
\langle g_b(v), \psi_m(v)\rangle_w =
\left\{
\begin{aligned}
& \dfrac{n_b}{\sqrt{T} \alpha} \dfrac{1}{\sqrt{2^m m!}}
  y^m 2^m,
  \quad \alpha = 1, \\ 
& \dfrac{n_b}{\sqrt{T} \alpha} \dfrac{1}{\sqrt{2^m m!}}
  (1-\alpha^2)^{m/2} H_m \left( \dfrac{\alpha y}{\sqrt{1-\alpha^2}}\right),
  \quad \alpha \neq 1. \\ 
\end{aligned}
\right.
\end{equation}

We consider a case with a strong perturbation, setting \( \kappa = 0.04 \), \( n = \sqrt{3} \), and \( k = 0.1 \). The parameters in $f_b(v)$ are chosen as \( n_p = 0.9 \), \( n_b = 0.1 \), \( v_{d,b} = 4.5 \), \( v_{th,p} = 2 \), and \( v_{th,b} = \sqrt{2}/2 \). These settings are consistent with those used in \cite{bessemoulin2022stability}.
Again we take $N=64,128,256$ and $J=16$ for both methods.

\begin{figure}[H]
    \centering
    \begin{subfigure}[b]{0.48\textwidth}
        \centering
        \includegraphics[width=\textwidth]{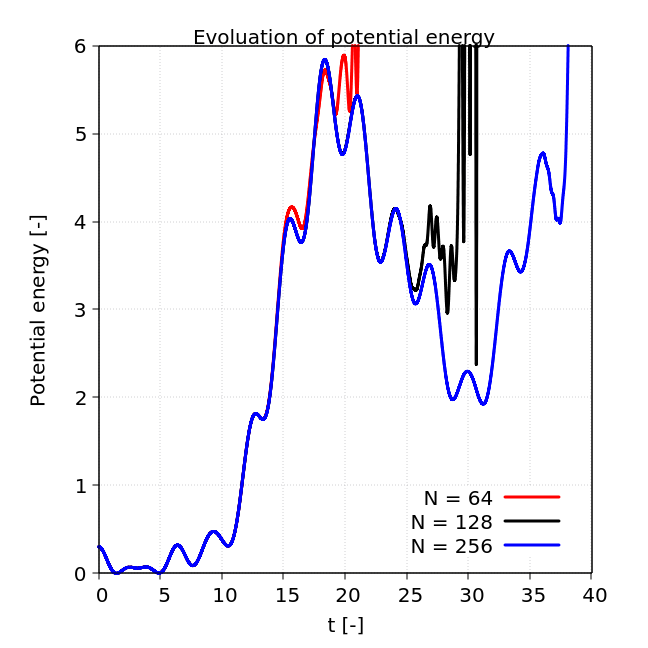}
    \end{subfigure}
    \begin{subfigure}[b]{0.48\textwidth}
        \centering
        \includegraphics[width=\textwidth]{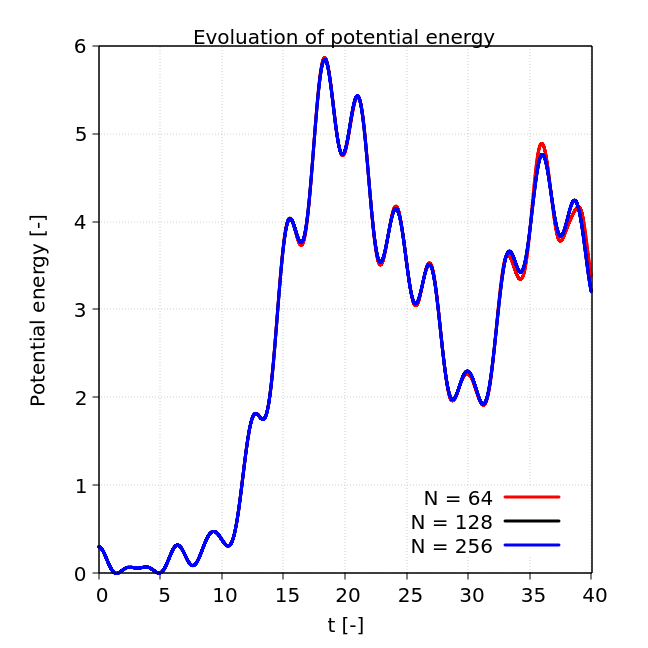}
    \end{subfigure}
    \caption{Bump-in-tail instability with strong perturbation: potential energy.}
    \label{fig:bump2:potential_energy}
\end{figure}
We plot the time evolution of the potential energy for both methods in Fig. \ref{fig:bump2:potential_energy}. 
The results from the Galerkin method exhibit a consistent structure and 
align well with those reported in \cite{bessemoulin2022stability}. 
In contrast, the results from the Petrov-Galerkin method exhibit instability and blow up over time.
\begin{figure}[H]
    \centering
    \begin{subfigure}[b]{0.48\textwidth}
        \centering
        \includegraphics[width=\textwidth]{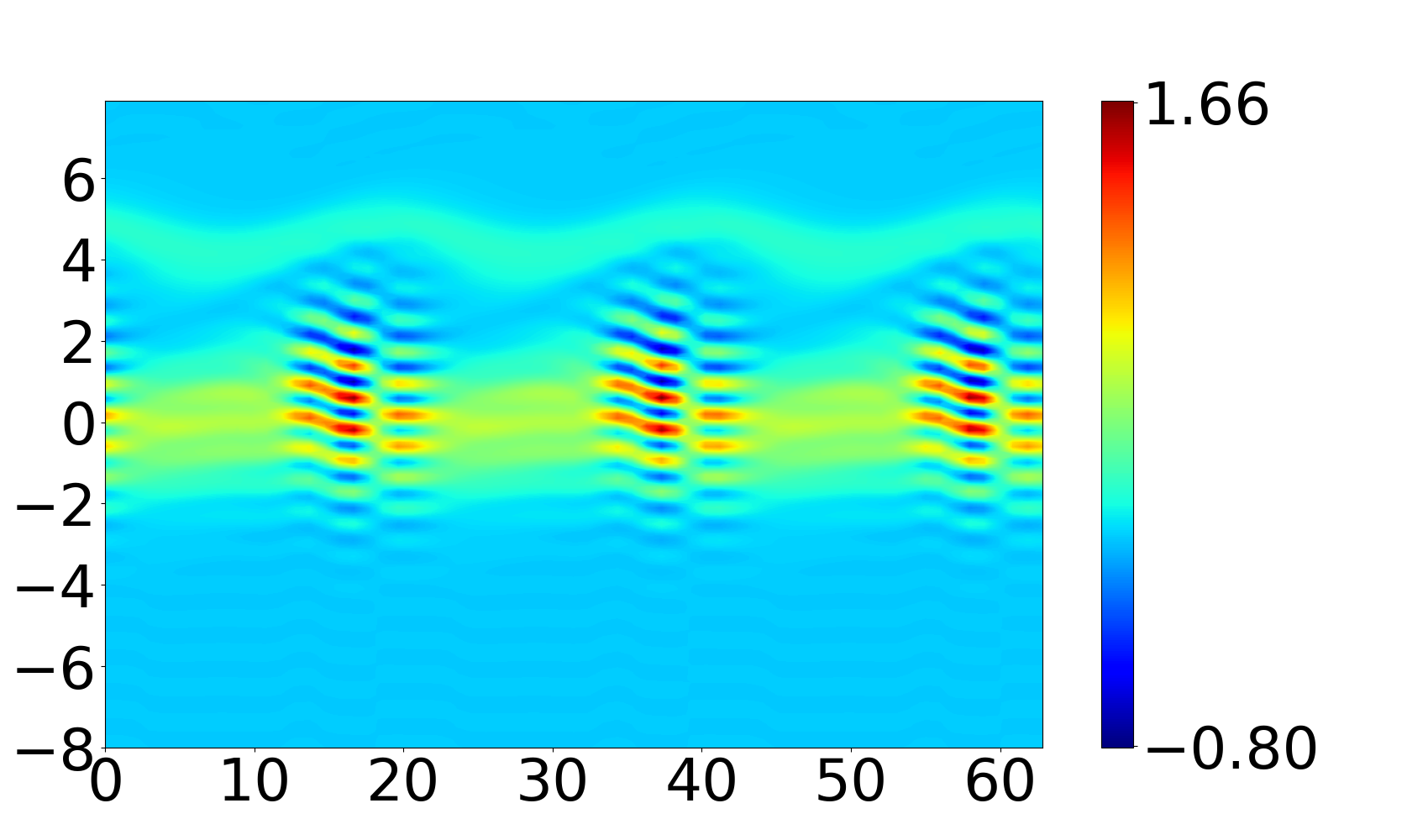}
	\caption{T = 10.0}
    \end{subfigure}
    \begin{subfigure}[b]{0.48\textwidth}
        \centering
        \includegraphics[width=\textwidth]{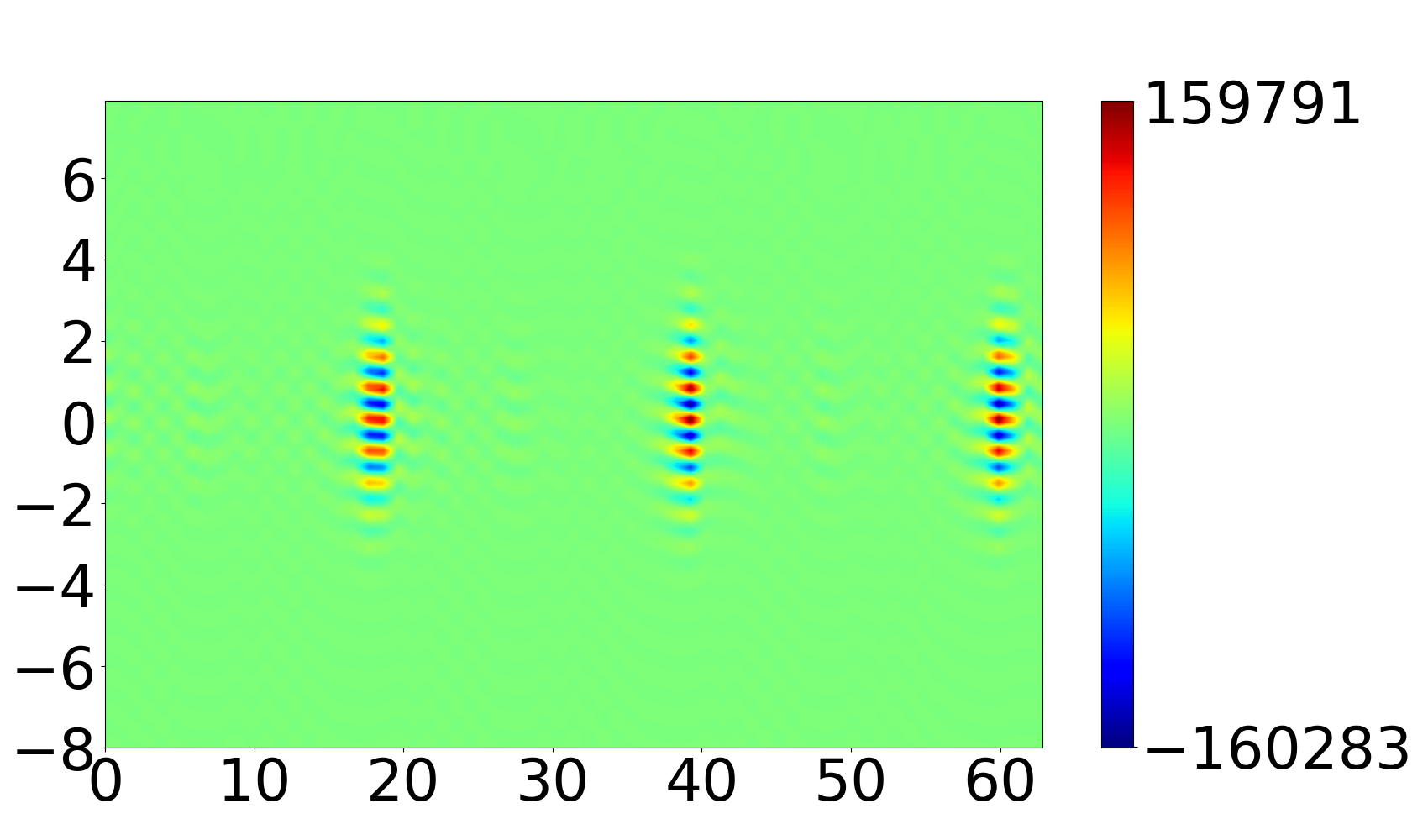}
	\caption{T = 20.0}
    \end{subfigure}
    \caption{Bump-in-tail instability with strong perturbation computed with the Petrov-Galerkin method: Plot of the distribution function $f$ at $t=10,20$, where $N=64$.}
    \label{fig:bump2:plot:Petrov-Galerkin}
\end{figure}
\begin{figure}[H]
    \centering
    \begin{subfigure}[b]{0.48\textwidth}
        \centering
        \includegraphics[width=\textwidth]{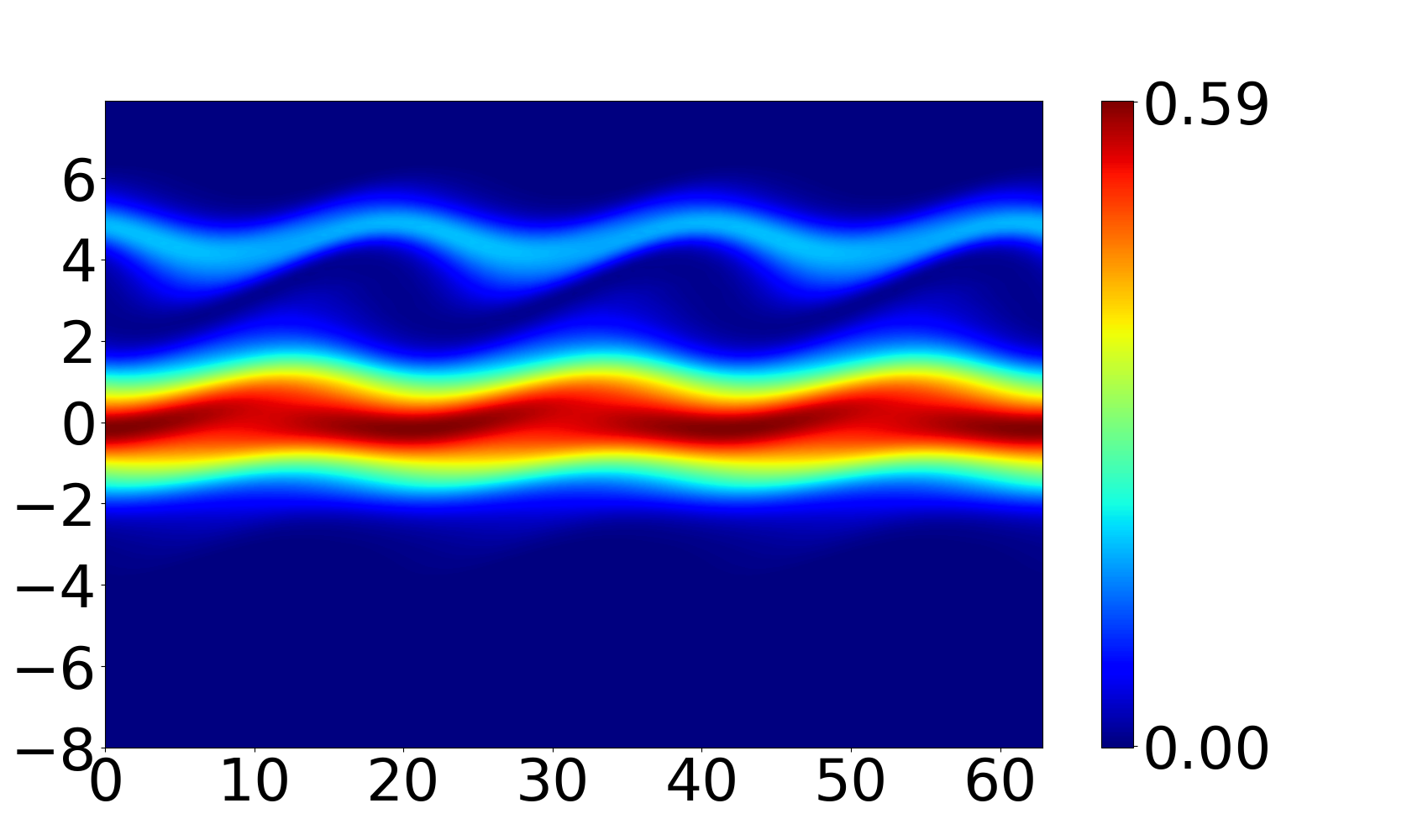}
	\caption{T = 10.0}
    \end{subfigure}
    \begin{subfigure}[b]{0.48\textwidth}
        \centering
        \includegraphics[width=\textwidth]{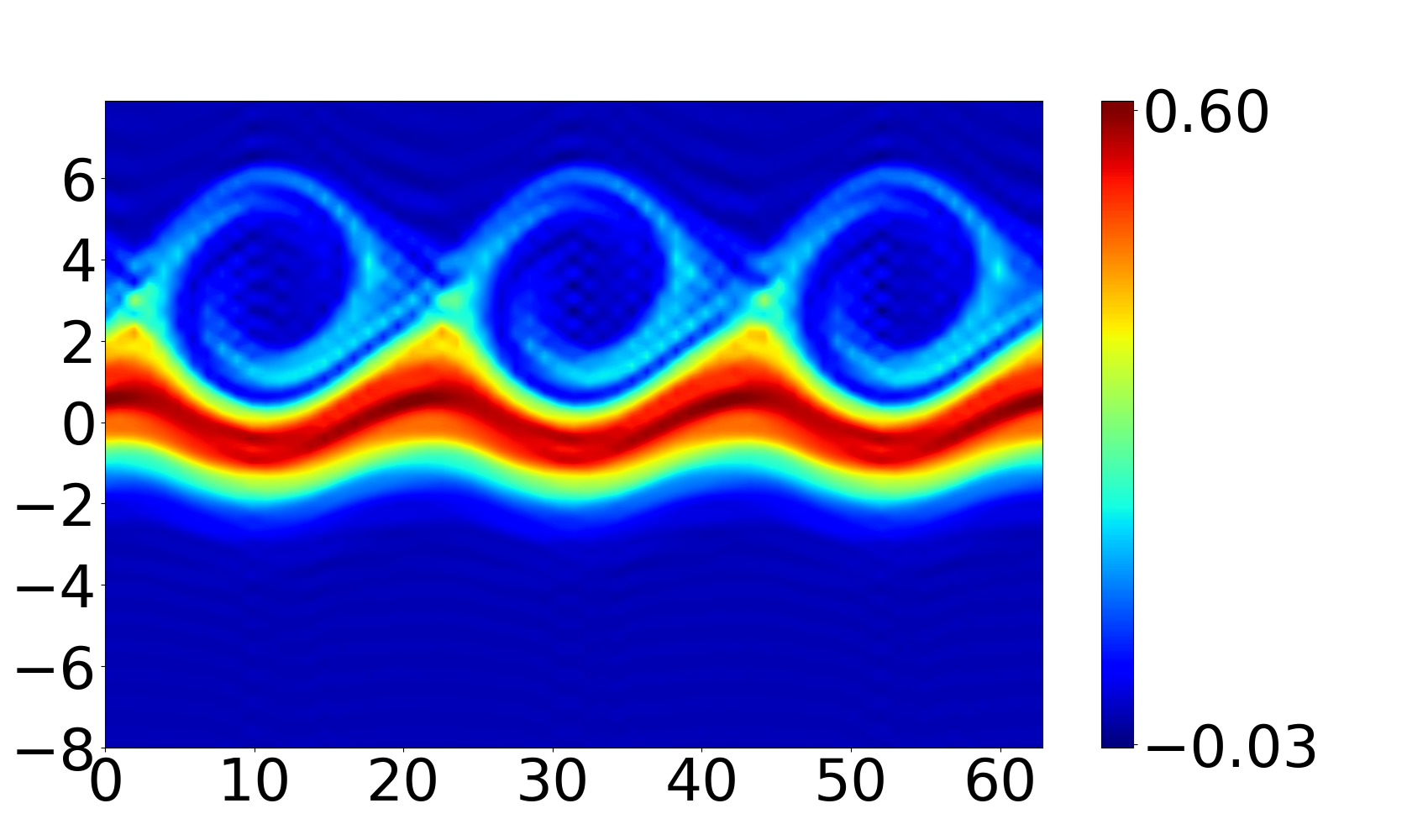}
	\caption{T = 20.0}
    \end{subfigure}
    \caption{Bump-in-tail instability with strong perturbation computed with the Galerkin method: Plot of the distribution function $f$ at $t=10,20$, where $N=64$.}
    \label{fig:bump2:plot:Galerkin}
\end{figure}
Finally, we present the surface plots of the distribution function at $t=10$ and $t=20$ 
in Fig. \ref{fig:bump2:plot:Petrov-Galerkin} \ref{fig:bump2:plot:Galerkin} for both methods. 
A comparison of the results reveals that, for the Petrov-Galerkin method, 
the solutions begin to exhibit instability at $t=10$ and completely blow up by $t=20$. 
In contrast, the Galerkin method produces relatively stable and well-behaved results over the same time intervals.

\section{Conclusion}
In this work, we explain why the numerical scheme based on the AW Hermite functions in the Petrov-Galerkin method applied to the VP system is unstable.
We propose in this paper a Galerkin method to the VP system with AW Hermite functions
allowing to stabilize the numerical solution. 
We also propose an equivalent form of which the resulting increase in computational cost remains modest.
The present work is the first stone to investigate the conservation properties and the convergence analysis of 
the proposed AW Hermite spectral discretization in the Galerkin method 
for the VP system. 

\appendix
\section{Analysis of the Gram matrix}\label{appendix:gram_matrix}

The coefficients of the matrix $A$ are 
$L(dxdv)^2$ scalar products of AW Hermite functions. 
These coefficients are computable in finite terms since 
the product of two AW Hermite functions 
can be expressed as a Gaussian function 
multiplied by a Hermite polynomial.
However, to our knowledge, 
the exact value of these coefficients is not available in the reference 
literature on special functions \cite{nist,szego1975orthogonal,magnus1967formulas}.
For demonstrating Proposition~\ref{prop:relation_interesting},
we calculate the quadratic scalar product of AW Hermite functions.
\begin{Theorem} \label{prop:4.1}
If the sum of the indices is odd $m+n\in 2\mathbb N+1$, 
then $a_{mn}=0$. Otherwise  
\begin{equation} \label{eq:b46}
  a_{mn}
= (-1)^{\frac{m-n}2} (\pi T)^{-\frac12} 2^{-(m+n) - \frac{1}2} 
  \frac{(m+n)!}{ \left( \frac{m+n}2 \right)! \sqrt{m!n!} }.
\end{equation}
\end{Theorem}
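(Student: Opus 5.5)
The plan is to reduce $a_{mn}$ to a classical weighted integral of two Hermite polynomials and evaluate that integral with the generating function $\sum_{n\ge0} H_n(x)\,s^n/n! = e^{2xs-s^2}$, the same identity already used in the stability discussion.

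\emph{Step 1 (reduction).} Substitute the definition \eqref{eq:hermite_functions} into $a_{mn}=\int_{\mathbb R}\psi_m\psi_n\,dv$ and change variables $x=v/\sqrt T$. This gives
\[
a_{mn} = \left(2^{m+n} m!\, n!\, \pi T\right)^{-\frac12}\sqrt{T}\; I_{mn},
\qquad
I_{mn}:=\int_{\mathbb R} H_m(x)H_n(x)\,e^{-2x^2}\,dx .
\]
The weight is $e^{-2x^2}$ rather than $e^{-x^2}$ because each AW Hermite function carries its own factor $e^{-v^2/T}$. This is why the $\psi_m$ are not orthogonal in plain $L^2$.

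\emph{Step 2 (generating function).} Multiply the generating series in $s$ and in $t$, weight by $e^{-2x^2}$, and integrate term by term; uniform convergence for $s,t$ in compact sets justifies the interchange. Completing the square gives
\[
\sum_{m,n\ge0} I_{mn}\frac{s^m t^n}{m!\,n!}
= \int_{\mathbb R} e^{-2x^2+2x(s+t)-s^2-t^2}\,dx
= \sqrt{\tfrac{\pi}{2}}\; e^{\frac{(s+t)^2}{2}-s^2-t^2}
= \sqrt{\tfrac{\pi}{2}}\; e^{-\frac{(s-t)^2}{2}} .
\]
Expand the right-hand side as $\sqrt{\pi/2}\sum_{k\ge0}(-1)^k (s-t)^{2k}/(2^k k!)$. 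Every monomial $s^m t^n$ in this series has $m+n=2k$ even. Hence $I_{mn}=0$, and so $a_{mn}=0$, whenever $m+n$ is odd. This proves the first claim.

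\emph{Step 3 (coefficients for $m+n=2k$).} The coefficient of $s^m t^n$ is $\binom{2k}{m}(-1)^{n+k}/(2^k k!)$. Multiplying by $m!\,n!$ gives
\[
I_{mn}=\sqrt{\tfrac{\pi}{2}}\,(-1)^{\frac{m-n}{2}}\,\frac{(2k)!}{2^k\,k!}.
\]
Here we used $n+k=\frac{m+3n}{2}\equiv \frac{m-n}{2}\pmod 2$. Inserting this into Step 1 and simplifying the powers of $2$ with $2^{-(m+n)/2}\cdot 2^{-k}=2^{-(m+n)}$ yields the form \eqref{eq:b46}: the sign $(-1)^{\frac{m-n}2}$, the factor $2^{-(m+n)-\frac12}$, and the ratio $\frac{(m+n)!}{(\frac{m+n}2)!\sqrt{m!n!}}$.

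The main obstacle is the bookkeeping of the normalization constants, not the combinatorics. The $T$- and $\pi$-dependent prefactor must be tracked carefully through the change of variables and the Gaussian integral $\int e^{-2x^2}\,dx=\sqrt{\pi/2}$. I would sanity-check it on $a_{00}=\int\psi_0^2\,dv$, computed directly, and on $a_{11}$.

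\emph{Normalization caveat.} My bookkeeping gives $a_{00}=2^{-1/2}$ with no $(\pi T)^{-1/2}$ factor. The formula \eqref{eq:b46} carries an extra $(\pi T)^{-\frac12}$. Either the paper intends a different normalization convention, or the stated prefactor needs adjusting. I would settle this with the $a_{00}$ check before finalizing; the vanishing statement, the sign, and the combinatorial factor are unaffected by it.
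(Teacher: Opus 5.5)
Your computation is correct, and it takes a genuinely different route from the paper. The paper never evaluates a two-index integral directly. It integrates by parts with $\psi_m'=-\sqrt{2(m+1)/T}\,\psi_{m+1}$ to get the ladder relation $\sqrt{m}\,a_{mn}=-\sqrt{n+1}\,a_{m-1,n+1}$, and iterates it to $a_{mn}=(-1)^m\binom{m+n}{m}^{1/2}a_{0,m+n}$. It then evaluates the single-index values $a_{0,k}$ in Lemma~\ref{lem:4.2} with the Hermite rescaling formula at $\lambda=1/\sqrt2$, keeping only the constant term. Your bilinear generating function gives parity, sign and magnitude in one stroke, and needs nothing beyond the classical generating function.

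The two routes are closely related. Your generating function depends on $s-t$ only, and $(\partial_s+\partial_t)G=0$ is equivalent to $I_{m+1,n}=-I_{m,n+1}$. That is exactly the paper's ladder relation, which in turn is the entrywise form of $AD+D^\top A=0$. The paper's version makes this structural link explicit; yours avoids the external rescaling formula.

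One small correction: uniform convergence on compact sets in $(s,t)$ does not by itself justify integrating term by term over $x\in\mathbb{R}$. Use instead the majorant $\sum_n |H_n(x)|\,|s|^n/n!\le e^{2|x||s|+s^2}$, obtained by comparing with the explicit coefficients of $H_n$. It dominates the double series by the integrable function $e^{-2x^2+2|x|(|s|+|t|)+s^2+t^2}$, so Fubini applies.

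Your normalization caveat is right, and you can settle it. With \eqref{eq:hermite_functions}, $a_{00}=(\pi T)^{-1/2}\int_{\mathbb{R}} e^{-2v^2/T}\,dv=2^{-1/2}$. More generally, $\psi_m$ at temperature $T$ equals $T^{-1/4}$ times the $T=1$ function evaluated at $v/\sqrt{T}$. Hence $a_{mn}$ does not depend on $T$ at all, and no factor $(\pi T)^{-1/2}$ can be correct. What you proved, \eqref{eq:b46} without the factor $(\pi T)^{-1/2}$, is the true value; a direct check of $a_{11}=2^{-3/2}$ confirms it.

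The extra factor comes from the proof of Lemma~\ref{lem:4.2}. There $\psi_0\psi_m$ is written with prefactor $(\pi T)^{-1}(2^m m!)^{-1/2}$ instead of $(\pi T)^{-1/2}(2^m m!)^{-1/2}$. The stated constant corresponds to normalizing by $(2^m m!\,\pi T)^{-1/2}$, which is not orthonormal for $\omega$. The discrepancy is harmless for the rest of the paper: Proposition~\ref{prop:relation_interesting} is homogeneous in the $a_{nm}$, and the Galerkin system is unchanged when $A$ is multiplied by a constant.
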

\begin{proof}
If $m+n$ is odd, then $\psi_m \psi_n$ is equal to a 
Gaussian function multiplied by an odd polynomial, 
so its integral vanishes. 
In this case $a_{mn}=0$. 
So let us consider the other case.

We have $\sqrt{ 2m/T } \  a_{mn}= \sqrt{ 2m/T } \ \int \psi_m(v) \psi_n(v)dv$. 
Using the general identity $(\psi_m)'(v)=-\sqrt{\frac{ 2(m+1)} T}\  \psi_{m+1}(v)$, 
we can write
$$
\sqrt{ 2m/T } \  a_{mn}= -  \int \psi_{m-1}'(v) \psi_n(v)dv=  \int \psi_{m}(v) \psi_n'(v)dv
$$
$$
=
- \sqrt{ 2(n+1)/T }   \int \psi_{m}(v) \psi_{n+1}(v)dv = - \sqrt{ 2(n+1)/T }   \  a_{m-1,n+1}.
$$
That is
\begin{equation} \label{eq:b40-rab}
\sqrt{ m } \  a_{mn}=- \sqrt{ n+1 }   \  a_{m-1,n+1}.
\end{equation}
We get by iteration
$$
{\left( m(m-1) \dots 2 \right) }^\frac12   a_{mn}= (-1)^m { \left( n(n+1) \dots (m+n)\right)}^\frac12 a_{0,m+n}
$$
that is
\begin{equation} \label{eq:b41}
a_{mn}=(-1)^m \left( {\frac{(n+m)!}{n!m!}} \right)^\frac12a_{0,m+n}.
\end{equation}
The technical Lemma \ref{lem:4.2} yields the value of $a_{0,m+n}$ from which we obtain 
$$
a_{mn}=(-1)^m \left( {\frac{(n+m)!}{n!m!}} \right)^\frac12(-1)^{(m+n)/2} (\pi T)^{-\frac12}2^{-(m+n) - \frac{1}2}  \frac{ (m+n)!^\frac12} { \left( \frac{m+n}2 \right)! }
$$
that is
$$
a_{mn}=(-1)^{\frac{m-n}2}(\pi T)^{-\frac12}2^{-(m+n) - \frac{1}2} \frac{(m+n)!}{ \left( \frac{m+n}2 \right)! \sqrt{m!n!} }.
$$
\end{proof}

\begin{lemma} \label{lem:4.2}
Let $m\in 2\mathbb N$. 
We have
$a_{0m}=(-1)^{m/2} (\pi T)^{-\frac12}2^{-m - \frac{1}2}  \frac{ (m!)^\frac12} { (m/2)! }$. 
\end{lemma}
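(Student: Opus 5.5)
We need $a_{0m}=\int_{\mathbb R}\psi_0(v)\psi_m(v)\,dv$ for even $m$. The plan is to substitute the definition \eqref{eq:hermite_functions} directly and reduce to a classical Gaussian integral of a Hermite polynomial. With $x=v/\sqrt T$ we have
\[
\psi_0(v)\psi_m(v)=(\pi T)^{-\frac12}(2^m m!)^{-\frac12}H_m(x)\,e^{-2x^2},
\]
and $dv=\sqrt T\,dx$. Hence
\[
a_{0m}=\pi^{-\frac12}(2^m m!)^{-\frac12}\int_{\mathbb R}H_m(x)e^{-2x^2}\,dx .
\]
Everything reduces to computing $I_m:=\int H_m(x)e^{-2x^2}dx$.

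For $I_m$ we will use the generating function $\sum_m H_m(x)s^m/m!=e^{2xs-s^2}$, which the paper already uses. Multiply by $e^{-2x^2}$ and integrate, completing the square: $-2x^2+2xs=-2(x-s/2)^2+s^2/2$. This gives
\[
\sum_m \frac{I_m}{m!}s^m=e^{-s^2/2}\sqrt{\pi/2}.
\]
Expanding $e^{-s^2/2}=\sum_k (-1)^k s^{2k}/(2^k k!)$ and matching coefficients with $m=2k$ yields
\[
I_m=\sqrt{\pi/2}\,(-1)^{m/2}\frac{m!}{2^{m/2}(m/2)!},
\]
and $I_m=0$ for odd $m$. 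Interchanging sum and integral is justified by absolute convergence, since $|H_m(x)|$ grows at most like $e^{x^2}$ times a factorial bound and the Gaussian $e^{-2x^2}$ dominates. Alternatively one can cite formula \eqref{eq:gradshteyn_table} with a rescaling, but the generating-function route is self-contained.

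Combining the two steps,
\[
a_{0m}=\pi^{-\frac12}2^{-\frac m2}(m!)^{-\frac12}\,2^{-\frac12}\pi^{\frac12}(-1)^{m/2}\frac{m!}{2^{m/2}(m/2)!}=(-1)^{m/2}\,2^{-m-\frac12}\frac{(m!)^{\frac12}}{(m/2)!}.
\]
This matches the claimed formula except for the prefactor $(\pi T)^{-\frac12}$.

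The main obstacle is therefore normalization bookkeeping, specifically the factor $(\pi T)^{-1/2}$ versus $\pi^0$ and the $\sqrt T$ from the change of variables. I would recheck the normalization in \eqref{eq:hermite_functions}. The stated orthogonality \eqref{eq:orthogonality_property} with weight $e^{v^2/T}$ gives $\int\psi_n\psi_m\omega\,dv=(\pi T)^{-1}\sqrt{T}\sqrt\pi\,\delta_{nm}$, so the convention as literally written does not normalize to $1$. I would reconcile the paper's intended constant, for instance by checking $m=0$: $a_{00}=\int\psi_0^2\,dv$ should equal $(\pi T)^{-1/2}2^{-1/2}$ under the lemma. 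I would then adjust the bookkeeping to whichever normalization makes $a_{00}$ agree, carrying that same constant through the computation above. The sign $(-1)^{m/2}$ and the combinatorial factor $2^{-m}\sqrt{m!}/(m/2)!$ are robust to this choice.
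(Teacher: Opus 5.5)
Your evaluation of $I_m=\int_{\mathbb R}H_m(x)e^{-2x^2}\,dx$ is correct, but it takes a different route from the paper. The paper uses the Hermite multiplication formula $H_m(\lambda x)=\sum_{l}\lambda^{m-2l}(\lambda^2-1)^l\frac{m!}{(m-2l)!\,l!}H_{m-2l}(x)$ with $\lambda=1/\sqrt2$ and $x=\sqrt2\,v/\sqrt T$. Then $e^{-2v^2/T}=e^{-x^2}$ is the natural weight for the $H_{m-2l}(x)$, and by orthogonality only the constant term $l=m/2$, equal to $(-\tfrac12)^{m/2}m!/(m/2)!$, survives. You instead integrate the generating function against $e^{-2x^2}$ and read off the coefficient of $s^m$ in $\sqrt{\pi/2}\,e^{-s^2/2}$. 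The paper's version is a one-liner once the cited formula is granted. Yours is self-contained, uses only the identity the paper already quotes, and gives the vanishing for odd $m$ at the same time. Your justification of the interchange is vague; a clean majorant is $\sum_m|H_m(x)|\,|s|^m/m!\le e^{2|x||s|+s^2}$, obtained by bounding the coefficients of $H_m$ in absolute value. Both routes give $\int_{\mathbb R}H_m(v/\sqrt T)e^{-2v^2/T}dv=\sqrt{\pi T/2}\,(-\tfrac12)^{m/2}m!/(m/2)!$.

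The problem is your last paragraph. The printed normalization \eqref{eq:hermite_functions} \emph{is} orthonormal for $\omega$: $\int\psi_n^2\,\omega\,dv=(2^nn!\sqrt{\pi T})^{-1}\cdot\sqrt{\pi T}\,2^nn!=1$. Your value $(\pi T)^{-1}\sqrt T\sqrt\pi$ uses the prefactor $(\pi T)^{-1}$ instead of the $(\pi T)^{-1/2}$ you correctly wrote for $\psi_0\psi_m$. Hence your $a_{0m}=(-1)^{m/2}2^{-m-\frac12}\sqrt{m!}/(m/2)!$ is the right value for the paper's definition, and the lemma as stated fails unless $\pi T=1$. Already $a_{00}=(\pi T)^{-1/2}\sqrt{\pi T/2}=2^{-1/2}$.

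The extra factor comes from the paper's own proof, which writes $\psi_0\psi_m=(\pi T)^{-1}(2^mm!)^{-1/2}e^{-2v^2/T}H_m(v/\sqrt T)$. That tacitly uses $\psi_m=(2^mm!\,\pi T)^{-1/2}H_m(v/\sqrt T)e^{-v^2/T}$. This normalization has $\int\psi_0\,dv=1$, consistent with $\rho_N=u_0$, but gives $\int\psi_n\psi_m\omega\,dv=(\pi T)^{-1/2}\delta_{nm}$.

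So ``adjusting the bookkeeping until $a_{00}$ agrees'' is not a proof step but a change of definition, and you should say so explicitly. With \eqref{eq:hermite_functions} the lemma holds without the factor $(\pi T)^{-1/2}$. With the normalization used in the paper's proof (prefactor $(\pi T)^{-1}$ in $\psi_0\psi_m$), your computation reproduces the stated formula exactly. The discrepancy is a single constant common to all $a_{mn}$, since Theorem~\ref{prop:4.1} writes each $a_{mn}$ as a multiple of $a_{0,m+n}$. It is therefore harmless for the homogeneous relation \eqref{eq:relationInteresting}.
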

\begin{proof}
We have
$\psi_0(v)\psi_m(v)=(\pi T)^{-1}\ (2^mm!)^{-\frac12} e^{-2v^2/T}H_m(v/\sqrt{T})$. 
To be able to  perform a rescaling in this expression, we can use  the general formula \cite[page 255]{magnus1967formulas}
$$
H_m(\lambda x)=  \sum_{l=0}^{[m/2]} \lambda^{m-2l} (\lambda^{2}-1)^l  \frac{m!}{(m-2l))!l!}H_{m-2l}(x).
$$ 
Take $\lambda=1/\sqrt 2$ and $x= \sqrt 2 v/\sqrt{T}$. Then
$$
H_m(v/\sqrt{T})= \left( -  \frac{1}{2} \right)^{m/2}
\frac{m!}{(m/2)!} +R(v)
$$
where the residual $R(v)$ is orthogonal to the weight $e^{-2v^2/T}$ 
because it is a linear combination of Hermite polynomials of degree $\geq 1$ (with convenient weight).
We obtain
$$
a_{0m}=\int \psi_0(v)\psi_m(v) dv =(\pi T)^{-1} (2^mm!)^{-\frac12}  \left( -  \frac{1}{2} \right)^{m/2}  \frac{m!}{(m/2)!}  \sqrt{T\pi/2}
$$
which yields the claim after simplification.
\end{proof}


\section{Proof of Proposition \ref{prop:relation_interesting}}\label{appendix:A}

This part is devoted to the proof of Proposition \ref{prop:relation_interesting}.
We define
$
\binom{b}{a} = 0, 
$
if $a$ or $b$ is not integer.
\begin{proof}
According Theorem~\ref{prop:4.1}, one has
\[
  a_{nm}
= (-1)^{\frac{n-m}{2}} (\pi T)^{-\frac{1}{2}} 2^{-(n+m)-\frac12}
  \dfrac{(n+m)!}{(\frac{n+m}{2})! \sqrt{n!m!}}, \quad\forall n+m \in 2\mathbb{N}.
\]
Using the value given in the claim, one has
\[
\begin{aligned}
  \sum_{m=0}^{N} a_{nm} z_{m,N+1}
=&(-1)^{\frac{n-(N+1)}{2}} (\pi T)^{-\frac{1}{2}} 2^{-(n+N+1)-\frac12} \\
 &\sum_{m=0}^{N} 
  (-1)^{\frac{N+1-m}{2}} 
  \dfrac{(n+m)!}{(\frac{n+m}{2})! n!m!} 
  \dfrac{n!(N+1)!}{\sqrt{n!(N+1)!}}
  \dfrac{1}{(\frac{N+1-m}{2})!} \\
=&(-1)^{\frac{n-(N+1)}{2}} (\pi T)^{-\frac{1}{2}} 2^{-(n+N+1)-\frac12} \\
 &S(N,n)
  \dfrac{n!(N+1)!}{(\frac{n+N+1}{2})! \sqrt{n!(N+1)!}},
\end{aligned}
\]
where 
\[
  S(N,n)
= \sum_{m=0}^{N} (-1)^{\frac{N+1-m}{2}}
  \dbinom{n+m}{n} \dbinom{\frac{n+N+1}{2}}{\frac{n+m}{2}}.
\]
Then using Lemma \ref{lemma:identity_S_N_n}, one has
\[
\begin{aligned}
  \sum_{m=0}^{N} a_{nm} z_{m,N+1}
=&(-1)^{\frac{n-(N+1)}{2}} (\pi T)^{-\frac{1}{2}} 2^{-(n+N+1)-\frac12} \\
 &S(N,n)
  \dfrac{n!(N+1)!}{(\frac{n+N+1}{2})! \sqrt{n!(N+1)!}} \\
=&-(-1)^{\frac{n-(N+1)}{2}} (\pi T)^{-\frac{1}{2}} 2^{-(n+N+1)-\frac12}
\dfrac{(n+N+1)!}{(\frac{n+N+1}{2})! \sqrt{n!(N+1)!}} \\
=&-a_{n,N+1}. 
\end{aligned}
\]
Moving the right-hand side to the left-hand side, we finish the proof.
\end{proof}
\begin{lemma}\label{lemma:identity_S_N_n}
$S(N,n) = -\dbinom{n+N+1}{n}, \quad\forall n+N+1 \in 2\mathbb{N}$.
\end{lemma}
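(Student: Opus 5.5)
The plan is to absorb the right-hand side into the sum, so that the identity becomes a vanishing alternating sum, and then recognise that sum as a high-order finite difference of a polynomial.

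\textbf{Step 1: extend the sum by one term.} Set $K=\frac{n+N+1}{2}$, which is an integer because $n+N+1\in 2\mathbb N$. Add to $S(N,n)$ the term with $m=N+1$. This term equals
\[
(-1)^0\binom{n+N+1}{n}\binom{K}{K}=\binom{n+N+1}{n}.
\]
Hence the lemma is equivalent to
\[
\sum_{m=0}^{N+1}(-1)^{\frac{N+1-m}{2}}\binom{n+m}{n}\binom{K}{\frac{n+m}{2}}=0 .
\]
By the convention $\binom{b}{a}=0$ for non-integer $a$ or $b$, only the indices $m$ with $n+m$ even contribute. Write $n+m=2j$. The exponent becomes $\frac{N+1-m}{2}=K-j$, and the sum becomes
\[
\Sigma:=\sum_{j}(-1)^{K-j}\binom{K}{j}\binom{2j}{n},
\]
where $j$ runs over the integers with $n\le 2j\le n+N+1=2K$.

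\textbf{Step 2: extend the range to all of $0\le j\le K$.} Introduce the polynomial
\[
p(j)=\frac{(2j)(2j-1)\cdots(2j-n+1)}{n!},
\]
which has degree $n$ in $j$. At every integer $j\ge 0$ it agrees with $\binom{2j}{n}$. In particular it vanishes at the integers $j$ with $0\le 2j<n$, because one of its factors is then zero. These are exactly the indices with $m=2j-n<0$, which are absent from $\Sigma$. Therefore
\[
\Sigma=\sum_{j=0}^{K}(-1)^{K-j}\binom{K}{j}p(j)=(\Delta^K p)(0),
\]
which is the $K$-th forward difference of $p$ evaluated at $0$.

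\textbf{Step 3: conclude by a degree count.} The $K$-th forward difference of a polynomial of degree less than $K$ vanishes identically. Since $n\le N$, we have $2n<n+N+1=2K$, so $\deg p=n<K$. Hence $\Sigma=0$, and therefore $S(N,n)=-\binom{n+N+1}{n}$.

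\textbf{Main obstacle.} The only delicate point is the bookkeeping of index ranges and parity in Steps 1 and 2. One must check that the original range $0\le m\le N$, together with the added term $m=N+1$, corresponds exactly to $\lceil n/2\rceil\le j\le K$. One must also check that the indices $j<n/2$ needed to complete the finite difference contribute zero through the polynomial $p$. This is where I would be most careful, verifying separately the two cases $n$ even and $n$ odd; in each case $N$ has the opposite parity, because $n+N+1$ is even.
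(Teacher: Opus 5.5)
Your proof is correct. It shares the paper's core mechanism: absorb $\binom{n+N+1}{n}$ as the $m=N+1$ term, then recognise the completed alternating sum as a high-order finite difference of a low-degree polynomial. The bookkeeping, however, is genuinely different.

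The paper keeps the index inherited from $m$ (for $N=2M$, $n=2q+1$, $m=2k+1$) and rewrites $\binom{q+M+1}{q+k+1}$ as $\binom{M}{k}$ times a rational factor. It must then prove that the combined factor $A(q,k)=\frac{(2q+2k+2)!\,k!}{(2k+1)!\,(q+k+1)!}$ is a polynomial of degree $q$ in $k$, using $A(0,k)=2$ and $A(q+1,k)=2(2q+2k+3)A(q,k)$, and concludes from $q<M$. The case $N$ odd is only asserted to be similar.

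You instead index by $j=(n+m)/2$. Then $\binom{K}{j}$ is already the weight of $\Delta^K$, the polynomial $\binom{2j}{n}$ is evident, and the degree condition $n<K$ is exactly $n\le N$. The only price is padding the sum down to $j=0$, which you handle correctly through the zeros of $p$ at $0\le 2j\le n-1$.

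This makes your argument shorter and uniform in the parity of $N$. The separate case check you flag as the main obstacle is unnecessary, since $m=2j-n$ gives $0\le m\le N+1 \iff \lceil n/2\rceil\le j\le K$ regardless of parity. The paper's route avoids the padding, but pays with the divisibility computation for $A(q,k)$ and a case split.

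Finally, you are right to use $n\le N$ explicitly. It is not in the lemma as stated but comes from the range $0\le n\le N$ in Proposition~\ref{prop:relation_interesting}. Without it the identity fails: for instance $N=0$, $n=1$ gives $S(0,1)=0\neq-2$.
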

\begin{proof}
We check the identity
\[
  S(N,n)
= \sum_{m=0}^{N+1} (-1)^{\frac{N+1-m}{2}}
  \dbinom{n+m}{n} \dbinom{\frac{n+N+1}{2}}{\frac{n+m}{2}}
- \dbinom{n+N+1}{n},
\]
We consider the case $N = 2M$ is even. 
Then $n$, $m$ are odd. We denote $n=2q+1$, $m=2k+1$, 
$S(N,n)$ can be rewritten as
\begin{equation}\label{eq:identity_to_check}
\begin{aligned}
  S(2M,n)
&=\sum_{k=0}^{M} (-1)^{M-k} 
  \dbinom{n+2k+1}{n} 
  \dbinom{\frac{n+2M+1}{2}}{\frac{n+2K+1}{2}}
- \dbinom{n+2M+1}{n}, \\
&=\sum_{k=0}^{M} (-1)^k P(2M, n, k) 
  \dbinom{M}{k}
- \dbinom{n+2M+1}{n}
\end{aligned}
\end{equation}
where $P(2M, n, k)$ is defined as
\[
\begin{aligned}
   P(2M, n, k)
&= (-1)^{M} \dfrac{((n+2M+1)/2)!}{n!M!}
   \dfrac{(n+2k+1)!k!}{(2k+1)!((n+2k+1)/2)!} \\
&= C(2M,n) 
   \dfrac{(2q+2k+2)!k!}{(2k+1)!(q+k+1)!}.
\end{aligned}
\]
By direct expansion, one checks $P(2M, n, k)$ can be written as 
a polynomial with respect to the variable $k$. 
To show this fact, define 
$A(q, k) = \dfrac{(2q+2k+2)!k!}{(2k+1)!(q+k+1)!}$.
It is clear that $A(0,k) = 2$. It is also clear that 
\[
  A(q+1, k)
= \dfrac{(2q+2k+4)(2q+2k+3)}{q+k+2} A(q,k)
= 2(2q+2k+3) A(q,k).
\]
By iteration, one has that $A(q,k)$ is a polynomial in $k$ of degree $q$.
So $P(2M, n, k)$ is also a polynomial in $k$ of degree $q=(n-1)/2 < M$.

On the other hand, one has the general identity for all degrees $r<M$ 
\[
\sum_{k=0}^{M} (-1)^k k^r \dbinom{M}{k} = 0, \quad r<M.
\]
Since $P(2M,n,k)$ is a polynomial in $k$ of the convenient degree, 
then the sum in \eqref{eq:identity_to_check} vanishes, which proves the case.

For the case $N = 2M-1$, $M\in\mathbb{N}^+$, 
the analysis is very similar to the case above.
\end{proof}

\end{document}